\documentclass[12pt,letter,twoside]{article}

\usepackage{lineno}
\usepackage{geometry}              
\geometry{letterpaper} 
\usepackage[affil-it]{authblk}
\usepackage{graphicx}
\usepackage{amsmath, amssymb,amsthm}
\usepackage{enumerate}
\usepackage{comment}
\usepackage{multirow}
\usepackage{url}
\usepackage{color}
\usepackage[pdftex,colorlinks=black,citecolor=black,linkcolor=black]{hyperref}
\usepackage{epstopdf}
\usepackage{listings} 
\usepackage{algorithmic,algorithm}
\usepackage{setspace}

\newbox\Ea
\setbox\Ea=\hbox{\raise0.9pt\hbox{$=$}}

\def\ec{\mathrel{\hbox{$\copy\Ea\kern-\wd\Ea\raise-3.5pt\hbox{$\sim$}$}}}

\newcommand{\bx}{\mathbf{x}}

\newcommand{\bu}{\mathbf{u}}
\newcommand{\bv}{\mathbf{v}}
\newcommand{\bz}{\mathbf{z}}
\newcommand{\by}{\mathbf{y}}

\newcommand{\bI}{\mathbf{I}}
\newcommand{\bw}{\mathbf{w}}
\newcommand{\bn}{\mathbf{n}}
\newcommand{\bP}{\mathbf{P}}

\newcommand{\bsigma}{\boldsymbol{\sigma}}
\newcommand{\bepsilon}{\boldsymbol{\epsilon}}

\newcommand{\bphi}{\boldsymbol{\phi}}

\newcommand{\cA}{{\mathcal A}}
\newcommand{\cB}{{\mathcal B}}
\newcommand{\cK}{{\mathcal K}}
\def\ec{\mathrel{\hbox{$\copy\Ea\kern-\wd\Ea\raise-3.5pt\hbox{$\sim$}$}}}









\newtheorem*{remark}{Remark}
\newtheorem{lemma}{Lemma}

\newtheorem{theorem}{Theorem}

\newtheorem{coro}{Corollary}

\begin{document}

\title{Well-posedness and Robust Preconditioners \\
for the Discretized Fluid-Structure Interaction Systems\thanks{
 This work was supported in part by NSF Grant DMS-1217142, DOE Grant DE-SC0006903 and Yunan Provincial Science and Technology Department Research  Award: Interdisciplinary Research in Computational Mathematics  and Mechanics with Applications in Energy Engineering. 
}}
\author{Jinchao Xu%
  \thanks{Email address: \texttt{xu@math.psu.edu}; 
 }}
   \affil{The Center for Computational Mathematics and Applications \\and Department of Mathematics, \\Pennsylvania State University,\\ University park, PA 16802}
\author{Kai Yang
 \thanks{Email address: \texttt{yang\_k@math.psu.edu}; Corresponding author. }}
   \affil{The Center for Computational Mathematics and Applications \\and Department of Mathematics, \\Pennsylvania State University,\\ University park, PA 16802}
\maketitle


\begin{abstract}

In this paper we develop a family of preconditioners for the
linear  algebraic systems arising from the arbitrary Lagrangian-Eulerian discretization of some
  fluid-structure interaction models.  After the time discretization, we formulate the fluid-structure interaction equations as saddle point problems and prove the uniform well-posedness. Then we discretize the space dimension by finite element methods and prove their uniform well-posedness by two different approaches under appropriate assumptions. The uniform well-posedness  makes it possible to design robust preconditioners for the  discretized fluid-structure interaction systems. Numerical examples are presented to show the robustness and efficiency of these preconditioners.
  
%
%

\end{abstract}
{\bf Keywords:} fluid-structure interaction, stabilization, robust preconditioners


{

\section{Introduction}
Fluid-structure interaction (FSI) is a much studied topic aimed at understanding the interaction between some moving structure and fluid and how their interaction affects the interface between them. FSI has a wide range of applications in many areas including hemodynamics \cite{Formaggia.L;Quarteroni.A;Veneziani.A2009b,Quarteroni.A2010a,Quarteroni.A;Veneziani.A;Zunino.P2001a,Crosetto.P;Deparis.S;Fourestey.G;Quarteroni.A2010a} and wind/hydro turbines \cite{Bazilevs.Y;Takizawa.K;Tezduyar.Ta,Hsu.M;Bazilevs.Y2012a,Bazilevs.Y;Hsu.M;Kiendl.J;Wuchner.R;Bletzinger.K2011a,Bazilevs.Y;Hsu.M;Akkerman.I;Wright.S;Takizawa.K;Henicke.B;Spielman.T;Tezduyar.T2011a}.

FSI problems are computationally challenging. The computational domain of FSI consists of fluid and structure subdomains. The position of the interface between fluid domain and structure domain is time dependent. Therefore, the shape of the fluid domain is one of the unknowns, increasing the nonlinearity of the FSI problems.

Many numerical approaches have been proposed to tackle the interface problem of FSI. The arbitrary Lagrangian-Eulerian (ALE) method is commonly used.  ALE adapts the fluid mesh to match the displacement of structure on interface.  Other approaches, such as the fictitious domain method \cite{Glowinski.R;Pan.T;Hesla.T;Joseph.D;Periaux.J2001a,Yu.Z2005a} and the immersed boundary method \cite{Zhang.L;Gerstenberger.A;Wang.X;Liu.W2004a,Wang.H;Chessa.J;Liu.W;Belytschko.T2008a,Peskin.C2002a},  have inconsistent fluid and structure meshes and, therefore, need special treatment at the interface, such as interpolation between different meshes. In this paper, we focus on the ALE method.

There is much research focused solving fluid-structure interaction problem numerically using ALE formulation. These studies can be roughly classified into partitioned approaches and monolithic approaches \cite{Dunne.T;Rannacher.R;Richter.T2010a}. Partitioned approaches employ single-physics solvers to solve the fluid and structure problems separately and then couple them by the interface conditions.  Monolithic approaches solve the fluid and structure problems simultaneously.  Depending on whether the interface conditions are exactly enforced at every time step, these approaches can also be classified into weakly and strongly coupled algorithms. Weakly coupled partitioned approaches are usually considered unstable due to the added-mass effect  \cite{Causin.P;Gerbeau.J;Nobile.F2005a}. A semi-implicit approach proposed in \cite{Fernandez.M;Gerbeau.J;Grandmont.C2007a} can avoid the added-mass effect for a wide range of applications, but it is subject to pressure boundary conditions.  Several types of semi-implicit methods were proposed in \cite{Quaini.A;Quarteroni.A2007a,Murea.C;Sy.S2009a}.  Strongly coupled approaches are  preferred for their stability. Although it is possible to achieve the strong coupling via partitioned solvers (by fixed-point iteration, for example), they usually introduce prohibitive computational costs due to slow convergence \cite{Fernandez.M;Gerbeau.J2009a}. In this paper we consider strongly coupled monolithic approaches and address some solver issues.  Monolithic approaches give us larger linear systems, for which efficient solvers are needed. 

 A great deal of work has been carried out to develop monolithic solvers for FSI \cite{Gee.M;Kuttler.U;Wall.W2011a,Turek.S;Hron.J2010a,Cai.X2010a,Barker.A;Cai.X2009a}.  In \cite{Heil.M2004a},  a fully-coupled solution strategy is proposed to solve the FSI problem with large structure displacement. The nonlinearity is handled by Newton's method and various approaches to solve the Jacobian system are proposed. Block triangular preconditioners and pressure Schur complement preconditioners are used for the preconditioned Krylov subspace solvers. However,  in \cite{Gee.M;Kuttler.U;Wall.W2011a} it is pointed out that block preconditioning for fluid and structure separately cannot resolve the coupling between fields and it is proposed that structure degrees of freedoms on interface  be eliminated in order to effectively precondition degrees of freedom at the interface.   In \cite{Barker.A;Cai.X2009a,Barker.A;Cai.X2010a,Barker.A;Cai.X2010b}, a Newton-Krylov-Schwarz method for FSI is developed.  Additive Schwarz preconditioners are used for Krylov subspace solvers and two-level methods are also developed.  In \cite{Badia.S;Quaini.a;Quarteroni.a2008b,Badia.S;Quaini.A;Quarteroni.A2008a}, ILU preconditioners and inexact block-LU preconditioners are proposed to solve FSI problems.

 In this paper, we reformulate semi-discretized systems of FSI as saddle point problems with fluid velocity, pressure and structure velocity as unknowns.  The ALE mapping is decoupled from the solution of the velocity and pressure. Then, we carry out our theoretical analysis and solver design under this framework. With particular choice of norms, we prove that the saddle point problem is well-posed.


For the finite element discretization of FSI, we propose two approaches to prove the well-posedness. The first  introduces a stabilization term to the fluid
  equations and the second adopts a norm of the velocity space that depends on the choice of the pressure space.  Both of these approaches lead to uniform well-posedness
  of the finite element discretization of the FSI model under appropriate assumptions.

Based on the uniform well-posedness, we propose optimal preconditioners based on the framework in  \cite{Mardal.K;Winther.R2011a,Zulehner.W2011a} such that the preconditioned linear systems have uniformly bounded condition numbers.  Then, we compare the proposed preconditioners with the augmented Lagrangian preconditioners \cite{Benzi.M;Olshanskii.M;Wang.Z2011a,Benzi.M;Olshanskii.M2006a,Benzi.M;Olshanskii.M2011a,Olshanskii.M;Benzi.M2008a}. To test the preconditioners, we solve the linear systems coming from the discretization of the Turek and Hron benchmark problems \cite{Turek.S;Hron.J2006a}. The iteration counts of GMRes with several preconditioners are compared.

The rest of this paper is organized as follows. In section 2, we introduce an FSI model and the ALE method.  In section 3, we study the proposed time and space discretization and its well-posedness.  In section 4, we propose optimal preconditioners for the discretized systems and demonstrate their performance with numerical examples.



\section{An FSI model}
We consider a domain $\Omega\subset \mathbb{R}^N(N=2,3)$ with a fluid
occupying the upper half $\Omega_f$ and a solid occupying the lower half
$\Omega_s$, as illustrated in Figure \ref{fig:moving_fsi_domain}.

\begin{figure}
\begin{center}
\graphicspath{{:figure:}}
\includegraphics[width=0.6\textwidth]{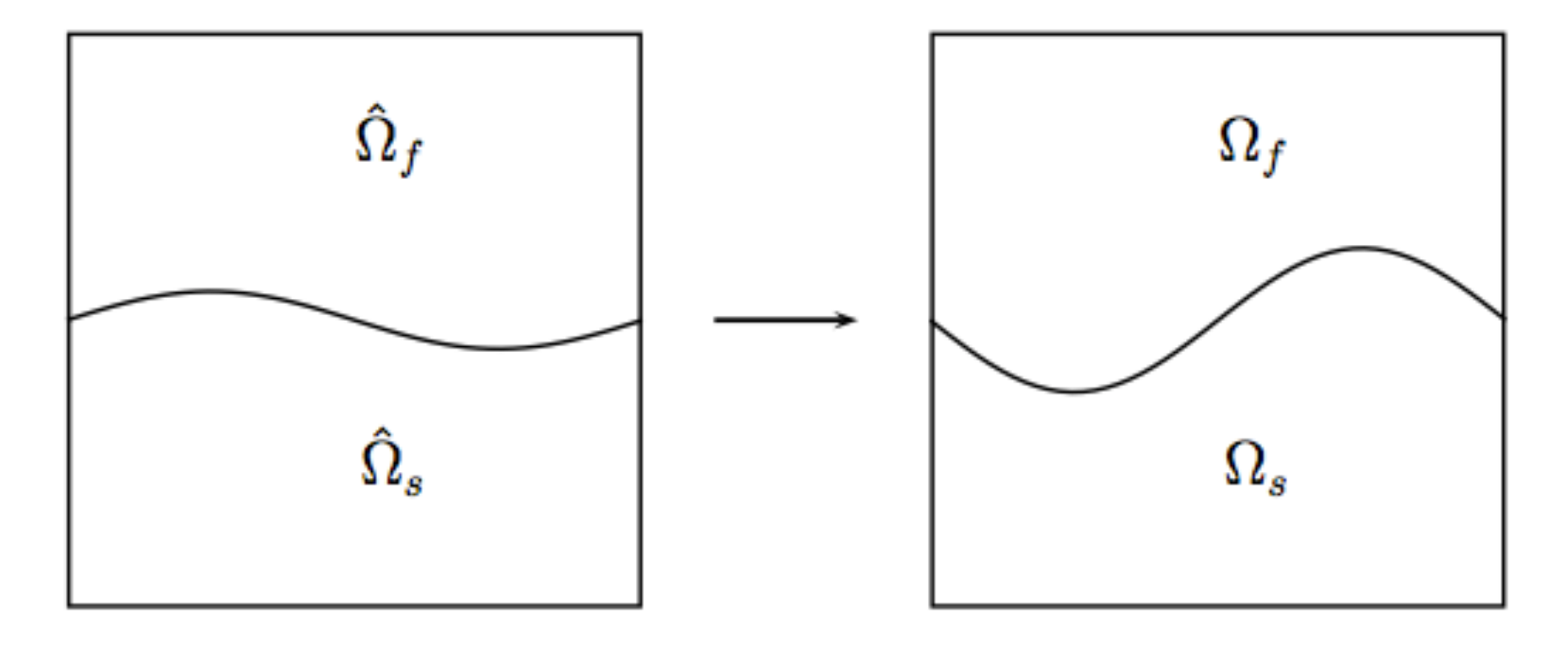}  
\end{center}
\caption{Moving domains of FSI}
\label{fig:moving_fsi_domain}
\end{figure}

%
%
%
%

Let $\Gamma:=\partial\Omega_f\cap\partial \Omega_s$ be the interface
of the fluid domain and the solid domain. On the outer boundary of the solid
$\partial\Omega_s\backslash \Gamma$, the solid is clamped; namely, the
displacement of the solid is zero on $\partial\Omega_s\backslash
\Gamma$. In this
paper, we always assume that both $\partial\Omega_s\backslash \Gamma$ and
$\partial\Omega_f\backslash \Gamma$ have positive measures.


 In addition, we assume that the interaction of the fluid and solid only
 occurs at the interface, and the interface $\Gamma$ may move over
 time due to this interaction.  We assume that the outer boundary is fixed. In the dynamic setting, we use $\Omega_f(t)$ and
 $\Omega_s(t)$ to denote the domains at time $t\in [0,T]$.  The
 domains satisfy $\bar\Omega=\bar\Omega_f(t)\cup \bar\Omega_s(t)$ and
 $\Gamma(t)=\partial\Omega_f(t)\cap\partial \Omega_s(t)$.
 
We denote the reference domains by
$$
\hat \Omega_f=\Omega_f(0),\quad \hat \Omega_s=\Omega_s(0)
$$
and the domains at time $t$ by
$$\Omega_f=\Omega_f(t),\quad \Omega_s=\Omega_s(t).$$

The motion in the fluid and structure can be characterized by
a \emph{flow map} $\bx(\hat\bx,t)$; namely, the position of the particle $\hat\bx$ at time $t$ is $\bx(\hat\bx,t)$.  Then, given $t>0$, $\bx(\cdot,t)$ is a diffeomorphism from $\Omega(0)$ to $\Omega(t)$.
%

For $({\hat{\bx}},t) \in \Omega(0)\times[0,T]$, we introduce the following
variables in \emph{Lagrangian coordinates} :
the displacement $\hat
{\bu}({{\hat{\bx}}},t)={\bx}({\hat{\bx}},t)-{\hat{\bx}}$, the velocity $\displaystyle  \hat {\bv}({\hat{\bx}},t)=\frac{\partial\hat
  {\bx}}{\partial t}$, the deformation tensor $\displaystyle {F}({\hat{\bx}},t)=\frac{\partial
  {\bx}}{\partial {\hat{\bx}}}({\hat{\bx}},t)$, 
and its determinant $\displaystyle {J}({\hat{\bx}},t)=det({F}({\hat{\bx}},t))$.  Using the
relationship $\bx=\bx(\hat \bx,t)$, we also introduce the velocity in \emph{Eulerian coordinates}: 
${\bv}({\bx},t)=\hat {\bv}({\hat{\bx}},t)$.  The symmetric part of the gradient is denoted by
$\displaystyle \bepsilon(\bv)=\frac{\nabla \bv+(\nabla \bv)^T}{2}. $

Let us now introduce a simple FSI model which consists of
the incompressible Navier-Stokes equations for the fluid (in Eulerian
coordinates) and linear elasticity equations for the structure (in
Lagrangian coordinates).

For clarity, we start with the momentum equations for fluid and solid
both in Eulerian coordinates:
$$
\rho_f D_t\bv_f-\nabla\cdot\bsigma_f=g_f,\quad\mbox{in
}\Omega_f,
$$
and
$$
\rho_s D_t\bv_s-\nabla\cdot\bsigma_s=g_s,\quad\mbox{in
}\Omega_s.
$$
Here $\bsigma_f$ and $\bsigma_s$ are the Cauchy stress tensors for fluid and
structure, respectively.  Here $D_t\bv_f$ and $D_t\bv_s$ are the material derivatives.

On the interface $\Gamma=\partial\Omega_f\cap\partial\Omega_s$,  the interface conditions are given in Eulerian coordinates as
\begin{equation}
\label{eq:interface_condition_E}
\bv_f=\bv_s\quad\mbox{and}\quad \bsigma_f\bn=\bsigma_s\bn\quad\mbox{ on }\Gamma.
\end{equation}
Note that we neglect some effects such as the surface tension in this model and
thus the stress is continuous on interface.

While we keep the Eulerian description for the fluid model,
we use the Lagrangian description for the structure.  Accordingly, we introduce the following
Sobolev spaces:
\begin{equation}
  \label{spaceV}
\mathbb{V}:=\{(\bv_f,\hat\bv_s)\in H^1_D(\Omega_f(t))\times H^1_D(\hat\Omega_s) \text{ such that }
\bv_f\circ\bx_s=\hat\bv_s, \text{ on }\hat\Gamma\},
\end{equation}
where
$$
H^1_D(\Omega_f(t)):=\{\bu\in (H^1(\Omega_f(t)))^N| \bu=0, \text{ on } \partial
\Omega\cap\partial\Omega_f\},
$$ 
$$
H^1_D(\hat\Omega_s):=\{\bu\in (H^1(\hat\Omega_s))^N| \bu=0, \text{ on } \partial
\Omega\cap\partial\hat\Omega_s\},
$$
and
$$
\mathbb{Q}:=L^2(\Omega_f(t)).
$$
$\mathbb{V}$ is defined for the fluid velocity in Eulerian coordinates and the structure velocity in Lagrangian coordinates. The condition $\bv_f\circ\bx_s=\hat\bv_s$ is used to enforce continuity of velocity in (\ref{eq:interface_condition_E}).  We will discuss the choice of norms for these spaces in the next section.

In order to formulate the problem weakly, we use test functions defined on $\Omega$, With the test function $\bphi\in H_0^1(\Omega)$, we first write the weak form for the fluid and structure, respectively:
$$
\int_{\Omega_f}\rho_fD_t
\bv_f\bphi d\bx+\int_{\Omega_f}\bsigma_f:\bepsilon(\bphi)d\bx-\int_{\Gamma}\bsigma_f\bn_f\cdot\bphi d\bx
=\int_{\Omega_f}g_f\bphi d\bx,
$$
$$\int_{\Omega_s}\rho_s D_t
\bv_s\bphi d\bx+\int_{\Omega_s}\bsigma_s:\bepsilon(\bphi) d\bx-\int_{\Gamma}\bsigma_s\bn_s\cdot\bphi d\bx
=\int_{\Omega_s}g_s\bphi d\bx.
$$

We add these two equations based on interface conditions (\ref{eq:interface_condition_E}):
$$\begin{aligned}
\int_{\Omega_f}\rho_f D_t
\bv_f\bphi d\bx+\int_{\Omega_f}\bsigma_f:\bepsilon(\bphi)d\bx+\int_{\Omega_s}\rho_sD_t
\bv_s\bphi d\bx&+\int_{\Omega_s}\bsigma_s:\bepsilon(\bphi) d\bx\\
=&\int_{\Omega_f}g_f\bphi d\bx+\int_{\Omega_s}g_s\bphi d\bx.
\end{aligned}
$$

By a change of coordinates $\bx=\bx(\hat{\bx},t),$ the stress term of
structure part can be written in Lagrangian coordinates
$$
\int_{\Omega_s}\bsigma_s:\bepsilon(\bphi)d\bx=\int_{\hat{\Omega}_s}
\hat\bsigma_s:\nabla_{\hat{\bx}}\hat{\bphi}F^{-1}\hat
Jd\hat{\bx}=\int_{\hat{\Omega}_s}(J\hat\bsigma_sF^{-T}):\nabla_{\hat{\bx}}\hat\bphi
d\hat{\bx},
$$ 
where $\hat\bphi(\hat\bx,t)=\bphi(\bx(\hat\bx,t),t)$ and $\hat\bsigma_s(\hat\bx,t)=\bsigma_s(\bx(\hat\bx,t),t)$.   We also change the coordinates for the inertial term and the body force term. Then, we get the following weak form of FSI
\begin{equation}
  \label{FSI-vari}
  \begin{aligned}
\int_{\Omega_f}\rho_f D_t
\bv_f\bphi +\bsigma_f:\bepsilon(\bphi) d\bx+\int_{\hat
  \Omega_s}\hat\rho_s\partial_{tt} \hat\bu_s\hat\bphi+&\bP_s:\nabla \hat\bphi d\hat\bx\\
  &=\int_{\Omega_f}g_f\bphi d\bx+\int_{\hat \Omega_s}J \hat g_s\bphi d\hat\bx,
  \end{aligned}
\end{equation}
which holds for any $\bphi\in \mathbb{V}$.  Here, the density of the structure $\hat\rho_s$ is defined as
$$\hat\rho_s(\hat\bx,t)=J(\hat\bx,t) \rho_s(\bx(\hat\bx,t),t)
$$
and $\bP_s=J\hat\bsigma_sF^{-T}$ is the first Piola-Kirchhoff stress.
By the conservation of mass, $\hat\rho_s$ is independent of $t$.

The variational formulation \eqref{FSI-vari} holds for general fluid and
structure models described by the Cauchy stresses $\sigma_f$ and
$\sigma_s$, respectively.   We now make some specific choices for
$\sigma_f$ and $\sigma_s$. 

For the fluid, we use the incompressible Newtonian model, which is
given by
\begin{equation}
\label{eq:constitutive_f}
\boldsymbol\bsigma_f= 2\mu_f\bepsilon(\bv_f)-p\mathbf{I}
\end{equation}
and
$$
\nabla\cdot \bv_f=0.
$$
For the structure, we use the linear elasticity model (for small deformations) in
Lagrangian coordinates,  which corresponds to the following
approximation:
\begin{equation}
\label{eq:constitutive_s}
\bP_s\approx \tilde\bP_s:=\mu_s\bepsilon(\hat\bu_s)+\lambda_s\nabla\cdot\hat\bu_s\bI.
\end{equation}
 
\paragraph{Initial and boundary conditions}
We consider the following Dirichlet boundary conditions
\begin{equation*}
\begin{aligned}
\bv_f&=\bv_f^D, &\text{ on } &\partial \Omega_f\cap\partial\Omega,\\
\hat\bu_s&=0, &\text{ on }& \partial \Omega_s\cap\partial\Omega,\\
\end{aligned}
\end{equation*}
and initial conditions

\begin{equation*}
\begin{aligned}
\hat\bu_s(0)=\bu_{s,0}, \quad \partial_t \hat\bu_s(0)&=\bu_{s,1},\quad\bv_f(0)&=\bv_{f,0}.\\
\end{aligned}
\end{equation*}

\begin{figure}
\begin{center}
\includegraphics[width=0.3\textwidth]{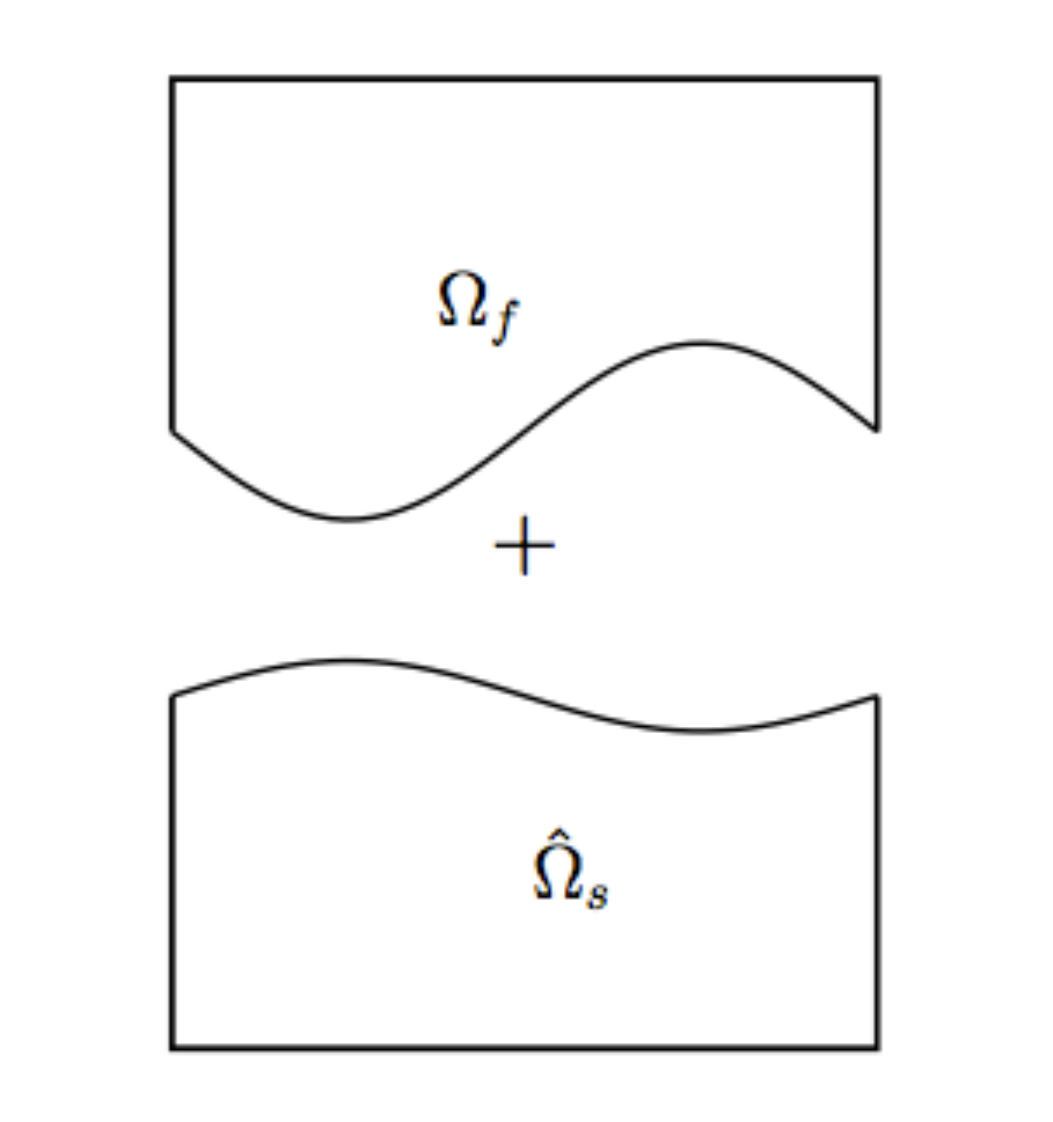}  
\end{center}
\caption{Computational domains of FSI}
\label{fig:computational_domain}
\end{figure}

%
%
In the rest of this paper, we do not rewrite the initial conditions in the weak formulations for brevity.  Moreover, we assume $\bv_f^D=0$. That is, there are only homogeneous Dirichlet boundary conditions for the fluid problem.

Together with the continuity equation and interface condition, the weak formulation of FSI is as follows:

\begin{description}
\item[The weak formulation of FSI:]
Find $\bv_f$, $p$ and $\hat\bu_s$ such that for any given $t>0$, the following equations hold for any  $(\bphi,\hat\bphi)\in \mathbb{V}$ and $ q\in \mathbb{Q}$
\begin{equation}
\label{eq:FSI_weak}
\left\{
\begin{aligned}
(\hat\rho_s\partial_{tt}\hat\bu_s,\hat\bphi)_{\hat\Omega_s}+\left(\rho_fD_t\bv_f,\bphi\right)_{\Omega_f}+(\tilde \bP_s,\nabla\hat\bphi)_{\hat\Omega_s}&+(\bsigma_f,\bepsilon(\bphi))_{\Omega_f}\\
&=\langle J\hat{g}_s,\hat\bphi\rangle+\langle g_f,\bphi\rangle,\\
(\nabla\cdot \bv_f,q)_{\Omega_f}&=0,\\
\bv_f\circ \bx_s&=\partial_t \hat\bu_s,\quad \text{on } \hat \Gamma.\\
\end{aligned}
\right.
\end{equation}
\end{description}

\begin{remark}
The solution $\bv_f$, $p$ and $\hat\bu_s$ are in some specific function spaces that require sufficient regularity in the time variable.  Since the regularity in time variable is not discussed in this paper,  we do not introduce these spaces in the weak formulation. 
\end{remark}

\section{Finite element discretization based on the ALE method}
In this section, we consider both time and space discretizations of
Equations (\ref{eq:FSI_weak}) and discuss the well-posedness. We first discretize the time variable $t$ with uniform time step size
$k=\Delta t$:
$$
t^n=nk, \quad n=0, 1, \ldots,
$$
and use the finite difference method to discretize time derivatives. For the space-time formulation of FSI, we refer to \cite{Tezduyar.T;Sathe.S2007a,Tezduyar.T;Sathe.S;Stein.K2006a} and references therein.

Since the function spaces usually depend on $t$, we use the superscript $n$ to indicate that the function space is at time $t^n.$ For example,
\begin{equation*}
 \label{spaceVn}
\mathbb{V}^n:=\{(\bv_f,\hat\bv_s)\in H^1_D(\Omega_f(t^n))\times H^1_D(\hat\Omega_s) \text{ such that }
\bv_f\circ\bx_s^n=\hat\bv_s, \text{ on }\hat\Gamma\}.
\end{equation*}

We use an ALE approach for the discretization of spatial variable.
In this approach, the structure domain is discretized by a fixed mesh
on the initial domain $\hat\Omega_s$ and the fluid domain is discretized by a sequence of moving meshes on the moving domain $\Omega_f(t)$.
\subsection{Time discretization}
\paragraph{Time discretization for the structure domain}

Without loss of generality,  we
consider for the time discretization of the structure variables the following simple finite difference schemes:
\begin{equation}
\label{eq:time_derivative_difference}
\begin{aligned}
(\partial_{t} \hat\bu_s)^{n+1}
\approx&
(\partial_{t,h} \hat\bu_s)^{n+1}
\equiv\frac{\hat\bu_s^{n+1}-\hat\bu_s^{n}}{k}, \\
 (\partial_{tt}\hat\bu_s)^{n+1}
\approx &(\partial_{tt,h}\hat\bu_s)^{n+1}
\equiv\frac{\hat\bu_s^{n+1}-2\hat\bu_s^{n}+\hat\bu_s^{n-1}}{k^2}.
\end{aligned}
\end{equation}
Other popular time discretization schemes such as the Newmark method \cite{Newmark.N1959a} can also be used.

\paragraph{Time discretization for the fluid domain by moving
  meshes} 
We need to find a mapping to move the fluid mesh such that it matches
the structure displacement on $\hat\Gamma$ and remains non-degenerate
in $\Omega_f$ as time evolves.  This mapping is a diffeomorphism in
continuous case, and we use piecewise polynomials to approximate it in
discrete case.  For a triangular mesh, only piecewise linear functions preserve the triangular shape of the elements in the mesh. In the rest of this paper, we assume that the mesh motion is piecewise linear.  We denote the image of $\hat\Omega_f$ under the piecewise linear map
$\bx_{h,f}$ by $\Omega_f^n$.  $\Omega_f^n$ is
discretized by a moving mesh with respect to time, denoted by
$T_h(\Omega_f^n)$.  Note that $\Omega_f^n$ is a polygonal domain in 2D, and a polyhedral domain in 3D. $\Omega_f^n$ is a result of numerical discretization, and is, in general, different from the domain shape
$\Omega_f(t^n)$ in the analytic solution of $(\ref{eq:FSI_weak})$. 

The technique we use to determine the mesh motion is the ALE method.  First introduced for finite
element discretizations of incompressible fluids in \cite{Hughes.T;Liu.W;Zimmermann.T1981a,Donea.J;Giuliani.S;Halleux.J1982a}, the ALE method provides an
approach to finding the fluid mesh that can fit the moving domain
$\Omega_f(t)$.

There are two main ingredients in the ALE approach:
\begin{enumerate}
\item Defining how the grid is moving with respect to time such that it matches
  the structure displacement at the fluid-structure interface.
\item Defining how the material derivatives are discretized on the moving grid. 
\end{enumerate}

Given the structure trajectory $\bx_s^n$ defined on $\hat\Gamma,$ the moving grid can be described by a diffeomorphism $\cA^n:\hat\Omega_f\mapsto \Omega_f$
that satisfies
\begin{equation}
\label{eq:ALE}
\left\{
\begin{aligned}
\cA^n(\hat \bx)&=\hat \bx, &\text{ on }& \partial\hat\Omega_f\cap\partial\hat\Omega,\\
\cA^n(\hat \bx)&=\bx_s^n(\hat \bx,t), &\text{ on }& \hat \Gamma.\\
\end{aligned} 
\right.
\end{equation}

\begin{figure}
\begin{center}
\includegraphics[width=0.6\textwidth]{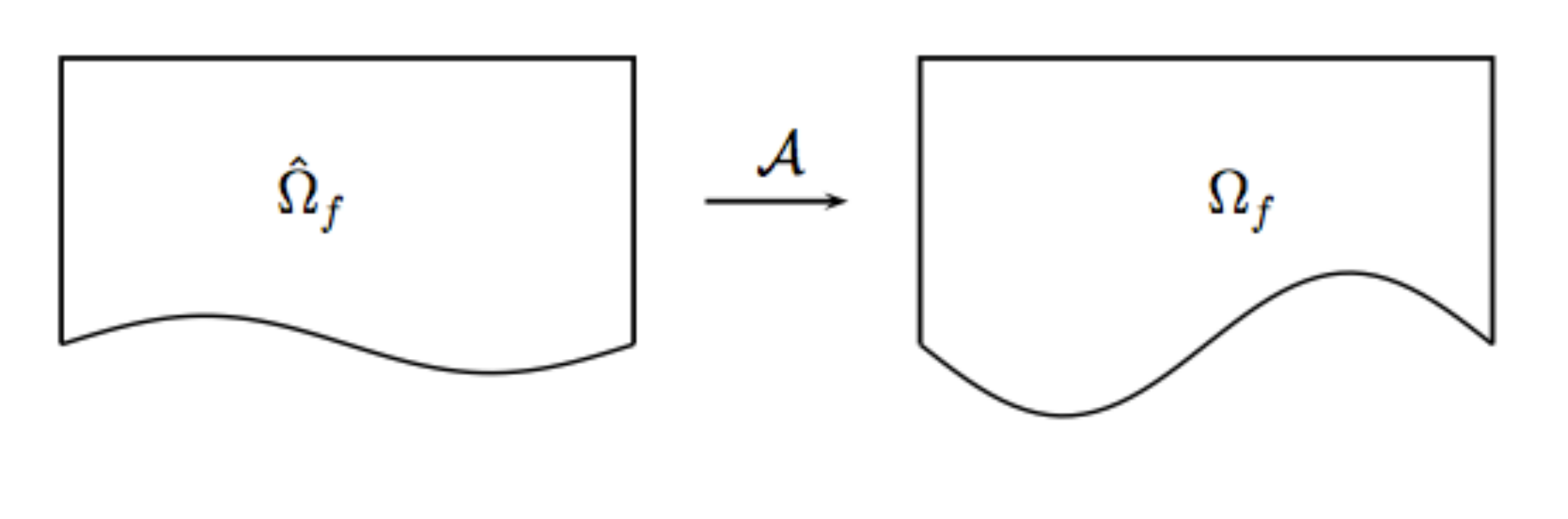}  
\end{center}
\caption{ALE mapping}
\end{figure}


%
%

ALE mappings satisfying (\ref{eq:ALE}) are by no means unique. In the
interior of $\hat \Omega_f$, the ALE mapping can be ``arbitrary".  One
popular approach to uniquely determine $\cA$ is to solve partial differential
equations
$$
\mathcal{L}\cA=0,\quad\mbox{in }\hat\Omega_f.
$$
A popular choice for the operator $\mathcal L$ is the Laplacian, $\mathcal{L}=-\Delta. $

To improve the quality of the fluid mesh with respect to the displacement
of the structure near the interface, the following elasticity model is
often used 
\cite{Donea.J;Huerta.A;Ponthot.J;Rodriguez-Ferran.A2004a}

$$
\mathcal{LA}=
-\mu\Delta\mathcal{A}-\lambda\nabla(\nabla\cdot\mathcal{A}).
$$
For more choices of formulating the ALE problem, we refer to \cite{Bazilevs.Y;Takizawa.K;Tezduyar.Ta,Donea.J;Giuliani.S;Halleux.J1982a} and references therein.

\paragraph{Discretization of the material derivative}
With the ALE mapping $\cA$ introduced, material derivatives can be written as follows

\begin{eqnarray*}
D_t\bv
&=&\partial_t\bv+(\bv\cdot\nabla)\bv\\
&=&\partial_t\bv+(\partial_t\cA\cdot\nabla)\bv+((\bv-\partial_t\cA)\cdot\nabla)\bv\\
&=&\partial_t\bv(\cA(\hat \bx,t),t))+((\bv-\partial_t\cA)\cdot\nabla)\bv.
\end{eqnarray*}
Using the simple approximation:
$$
\partial_t\bv(\cA(\hat \bx,t^{n+1}),t^{n+1}))
\approx \partial^\cA_{t,h}\bv|_{(\cA(\hat \bx,t^{n+1}),t^{n+1})}:=
\frac{v(\cA(\hat \bx, t^{n+1}), t^{n+1})-v(\cA(\hat \bx, t^{n}), t^{n})}{k}
$$
and 
$$
(\partial_t\cA)(\hat \bx,t) 
\approx (\partial_{t,h}\cA)(\hat \bx,t) 
:=\frac{\cA(\hat \bx, t^{n+1})-\cA(\hat \bx, t^{n})}{k},
$$
we obtain an approximation of material derivatives as follows:
\begin{equation}
  \label{eq:Dthv}
(D_t\bv)^{n+1}\approx (D_{t,h}\bv)^{n+1}:=
\partial^\cA_{t,h}\bv(\bx,t^{n+1})+((\bv-\partial_{t,h}\cA)\cdot\nabla)\bv(\bx,t^{n+1}),
\end{equation}
for $\bx=\cA(\hat \bx,t^{n+1})$.


 With the aforementioned discretization of material derivatives, we write the momentum equation of Navier-Stokes equations as
$$\rho_f\partial^\cA_{t,h}\bv_f+\rho_f((\bv_f-\partial_{t,h}\cA)\cdot\nabla)\bv_f-\mu\nabla\cdot\bsigma_f=g_f. $$

Once the time derivatives are discretized using (\ref{eq:time_derivative_difference}) and (\ref{eq:Dthv}), we obtain the fully implicit scheme.

\begin{description}
\item[Fully implicit (FI) scheme:] find $\bv^{n+1}_f\in \mathbb{V}_{f}^{n+1}$, $\hat\bu^{n+1}_s\in \hat {\mathbb V}_{s}$, $p\in\mathbb{Q}^{n+1}$ and $\cA^{n+1}\in H^1(\hat\Omega_f)$ such that for any $(\bphi,\hat\bphi)\in \mathbb{V}^{n+1}$ and $q\in \mathbb{Q}^{n+1}$,
\begin{equation}
\label{eq:fsi_fi}
\left\{
\begin{aligned}
( \hat\rho_s (\partial_{tt,h}\hat\bu_s)^{n+1},\hat\bphi)_{\hat\Omega_s}+( \rho_f(D_{t,h}\bv_f)^{n+1},&\bphi)_{\Omega_f}+(\bsigma_f^{n+1},\bepsilon(\bphi))_{\Omega_f}\\
+(\tilde\bP^{n+1}_s,\nabla\hat\bphi)_{\hat\Omega_s}&=\langle J\hat{g}_s,\hat\bphi\rangle+\langle g_f,\bphi\rangle,\\
(\nabla\cdot \bv^{n+1}_f,q)_{\Omega_f}&=0,\\
\bv_f^{n+1}\circ{\bx_{s}^{n+1}}&=(\partial_{t,h}\hat\bu_s)^{n+1},\quad\mbox{ on }\hat\Gamma,\\
\mathcal{L}\cA^{n+1}&=0,   ~~~~~~~~\qquad\text{ in } \hat \Omega_f,\\
\cA^{n+1}( \hat{\bx})&= \hat{\bx}, ~~~~~~~\qquad\text{ on } \partial\hat\Omega_f\cap\partial\hat\Omega,\\
\cA^{n+1}( \hat{\bx})&= \hat\bx+\hat\bu_s^{n+1}, ~~~~\text{ on } \hat \Gamma,\\
\end{aligned}
\right.
\end{equation}
\end{description}
The structure displacement $\hat\bu_s^{n+1}$ serves as the boundary condition for the ALE problem.  Note that $\cA^{n+1}$ has to be a homeomorphism. The fluid stress $\bsigma_f^{n+1}$ is defined by (\ref{eq:constitutive_f}) in terms of $\bv_f^{n+1}$ and $p^{n+1}$. The structure stress $\tilde \bP_s^{n+1}$ is defined by (\ref{eq:constitutive_s}) in terms of $\hat\bu_s^{n+1}$.

In the FI scheme, nonlinearity comes from the convection term and the dependence of the Navier-Stokes (NS) equations on the ALE mapping. To solve (\ref{eq:fsi_fi}), Newton's method or fixed-point iteration may be used to linearize the problem. 

Another frequently used linearization of the FI scheme is the following geometry-convective explicit scheme\cite{Crosetto.P;Deparis.S;Fourestey.G;Quarteroni.A2010a,Crosetto.P2011a,Malossi.A;Blanco.P;Crosetto.P;Deparis.S;Quarteroni.A2013a}

\begin{description}
\item[ Geometry-convective explicit (GCE) scheme:] Find $\bv^{n+1}_f\in H_D^1(\Omega_f(t^{n}))$, $\hat\bu^{n+1}_s\in H_D^1(\hat\Omega_s)$, $p\in L^2(\Omega_f(t^{n}))$ and $\cA^{n+1}\in H^1(\hat\Omega_f)$ such that for any $(\bphi,\hat\bphi)\in \mathbb{V}^{n}$ and $q\in \mathbb{Q}^{n}$,
\begin{equation}
\footnotesize
\label{eq:fsi_gce}
\left\{
\begin{aligned}
( \rho_f(\partial_{t,h}^{\cA}\bv_f)^{n+1},\bphi )_{\Omega_f}+( \hat\rho_s & (\partial_{tt,h}\hat\bu_s)^{n+1},\hat\bphi )_{\hat\Omega_s}+(\bsigma_f^{n+1},\bepsilon(\bphi))_{\Omega_f}\\
+(\tilde\bP_s^{n+1},\nabla (\hat\bphi))_{\hat\Omega_s} &=\langle g_f+((\bv_f^{n}-\partial_{t,h}\cA^{n+1})\cdot \nabla) \bv_f^{n},\bphi\rangle_{\Omega_{f}}+\langle J\hat{g}_s,\hat\bphi\rangle_{\hat\Omega_s}, \\
(\nabla\cdot \bv^{n+1}_f,q)_{\Omega_f}&=0,\\
\bv_f^{n+1}\circ{\bx_{h}^{n}}&=(\partial_{t,h}\hat\bu_s)^{n+1},\quad\qquad\qquad\qquad\mbox{ on }\hat\Gamma,\\
\mathcal{L}\cA^{n+1}&=0,   ~~~~~~~~~~~\qquad\qquad\qquad\qquad\text{ in } \hat \Omega_f,\\
\cA^{n+1}( \hat{\bx})&= \hat{\bx}, ~~~~~~~~~~~\qquad\qquad\qquad\qquad\text{ on } \partial\hat\Omega_f\cap\partial\hat\Omega,\\
\cA^{n+1}( \hat{\bx})&= \hat\bx+\hat\bu_s^{n}(\hat\bx)+k\bv_f^{n}\circ\bx_h^n(\hat\bx), ~~~~~\text{ on } \hat \Gamma.\\
\end{aligned}
\right.
\end{equation}
\end{description}
The boundary condition for $\cA^{n+1}$ is given by $\hat\bu_s^n$, the structure displacement, and $\bv_f^n$, the fluid velocity,  from the previous time step. Thus, the solution of $\cA^{n+1}$ is decoupled from solving momentum and continuity equations.  After $\cA^{n+1}$ is solved, the mapping from $\hat\Omega_f$ to $\Omega_f(t^n)$ is known and $\partial_{t,h}\cA^{n+1}$ can be calculated.  In (\ref{eq:fsi_gce}), the convection term is explicitly calculated using $\partial_{t,h}\cA^{n+1}$ and $\bv_f^{n}$
\begin{equation}
\label{eq:explicit_convection}
(\bv_f^{n+1}-\partial_t\cA^{n+1})\cdot \nabla\bv_f^{n+1}\approx (\bv_f^{n}-\partial_{t,h}\cA^{n+1})\cdot \nabla\bv_f^{n}.
\end{equation}

The GCE scheme in the literature has the following linearization of the convection term \cite{Crosetto.P;Deparis.S;Fourestey.G;Quarteroni.A2010a,Crosetto.P2011a,Malossi.A;Blanco.P;Crosetto.P;Deparis.S;Quarteroni.A2013a}:
\begin{equation}
\label{eq:semiexplicit_convection}
(\bv_f^{n+1}-\partial_t\cA^{n+1})\cdot \nabla\bv_f^{n+1}\approx (\bv_f^{n}-\partial_{t,h}\cA^{n+1})\cdot \nabla\bv_f^{n+1}.
\end{equation}
We take (\ref{eq:explicit_convection}) instead of (\ref{eq:semiexplicit_convection}) since the former results in symmetric variational problems and facilitates our analysis. However, we also briefly discuss about the unsymmetric cases due to (\ref{eq:semiexplicit_convection}) in the next section.

Since the solution of $\cA^{n+1}$ is decoupled from momentum and continuity equations,  we do not rewrite the equations about $\cA$ in the GCE scheme in the rest of the paper.

\subsubsection*{Change of variables for structure equations}
Note that the discretized interface condition for the velocity is
$$\bv_f^n\circ\bx_{s,h}^n=\frac{\hat\bu_s^n-\hat\bu_s^{n-1}}{\Delta t},\quad\mbox{ on }\hat\Gamma.$$
The velocities of fluid and structure are assumed to be continuous on the interface $\hat \Gamma$. By introducing the structure velocity in the same fashion as in (\ref{eq:time_derivative_difference}),
\begin{equation}
\label{eq:change_v}
\hat\bv_s^n=\frac{\hat\bu_s^n-\hat\bu_s^{n-1}}{\Delta t},
\end{equation}
 the interface condition becomes
$$\bv_f^n\circ\bx_{s}^n=\hat\bv_s^n,\quad\mbox{ on }\hat\Gamma.$$

Therefore, the unknowns $\bv_f$ and $\hat\bv_s$ are continuous on $\Gamma$ with a change of coordinates for $\bv_f$ and $(\bv_f^n,\hat\bv_s^n)$ belongs to the space $\mathbb{V}^n$.  Instead of $\hat\bu_s$, we take $\hat\bv_s$ as one of the unknowns since it facilitates our theoretical analysis in the next section.  We change the variables in the GCE scheme and get the modified GCE scheme:

\begin{description}
\item[ Modified GCE scheme:] Find $(\bv_f^{n+1},\hat\bv_s^{n+1})\in \mathbb{V}^{n}$ and $p\in\mathbb{Q}^{n}$ such that $\forall(\bphi,\hat\bphi)\in \mathbb{V}^{n}$ and $\forall q\in \mathbb{Q}^{n}$,

\begin{equation}
\label{eq:fsi_mgce}
\left\{
\begin{aligned}
\frac{1}{k} ( \rho_f\bv_f^{n+1},\bphi )_{\Omega_f}+\frac{1}{k} ( \hat\rho_s \hat\bv_s^{n+1},\hat\bphi )_{\hat\Omega_s}+&(\bsigma_f^{n+1},\bepsilon(\bphi))_{\Omega_f}\\
+k(\tilde\bP_s(\hat\bv^{n+1}_s),\nabla \hat\bphi)_{\hat\Omega_s} &=\langle \tilde g_f,\bphi\rangle_{\Omega_f}+\langle \tilde{g}_s,\hat\bphi\rangle_{\hat\Omega_s}, \\
(\nabla\cdot \bv^{n+1}_f,q)_{\Omega_f}&=0,\\
\end{aligned}
\right.
\end{equation}
\end{description}
where 
$$ \tilde g_f=g_f+((\bv_f^{n}-\partial_{t,h}\cA^{n+1})\cdot \nabla) \bv_f^{n}+\rho_f\bv_f^n/k$$
$$\tilde{g}_s=J\hat{g}_s+\hat\rho_s\hat\bv_s^n/k-\tilde\bP_s(\hat\bu_s^n).$$
  $\tilde\bP_s(\hat\bv_s^{n+1})$ is in terms of $\hat\bv_s^{n+1}$ instead of $\hat\bu_s^{n+1}$; that is,
$$\tilde\bP_s(\hat\bv_s^{n+1})=\mu_s\bepsilon(\hat\bv_s^{n+1})+\lambda_s\nabla\cdot\hat\bv_s^{n+1}\bI.$$

\subsection{Space discretization}
The structure domain $\hat\Omega_s$ is discretized by a fixed triangulation, denoted by $T_h(\hat\Omega_s)$.  The
corresponding finite element space is defined as:
$$
\hat {\mathbb{V}}_{h,s}=\{ \hat\bu \in H_D^1(\hat\Omega_s): \hat\bu|_{\tau}\in
\mathcal{P}_m,\forall \tau\in T_{h}(\hat\Omega_s)\}.
$$

The fluid domain $\Omega_f$ is moving over time due to the interaction. At time $t=0$, we have the initial triangulation $T_h(\hat\Omega_f)$ on $\hat\Omega_f$. In this paper we only consider the case in which $T_h(\hat\Omega_s)$ and $T_h(\hat\Omega_f)$ are matching on the interface $\hat\Gamma$. 

For $t>0$, the fluid domain $\Omega_f(t)$ evolves due to the motion of interface. Therefore, we discuss the discrete interface motion first. The structure displacement $\bu_s$ provides the motion of the interface. Note that $\bu_s$ is in some finite element space and, therefore, the displacement of the interface $\Gamma$ is piecewise polynomial.  This approximation of interface motion introduces additional error, besides that of approximating velocity in $H^1$ and  pressure in $L^2$ with piecewise polynomials.  Since only the triangular elements are considered in this paper, we use piecewise linear interface motion, which transforms a triangular element to another triangular element.  If higher order elements are used for the structure displacement, like P2, interpolations have to be performed in order to get P1 interface motion. For example, the interface motion of GCE scheme is approximated by
$$\bx_s^{n+1}(\hat\bx)\approx\hat\bx+\Pi_h^1(\hat\bu_s^{n}+k\bv_f^{n}\circ\bx_h^n)(\hat\bx),\quad\hat\bx\in\hat\Gamma. $$
Here, $\Pi_h^1$ is a interpolation operator, the range of which is the space of the continuous and piecewise linear functions.  

%

\paragraph{Discrete ALE problem} With the discrete boundary motion provided, we solve a discrete version of the ALE equations.  We only consider piecewise linear ALE mappings to keep the mesh triangular.  Once we obtain the discrete ALE mapping $\cA_h$, the fluid triangulation on the current configuration can be obtained.  Denote the set of grid points for the triangulation of $T_h(\hat\Omega_f) $ by
$$
\hat {\cal N}_h=\{\hat \bx_i; i=1:n_h\}. 
$$ 
Then, the set of grid points for the triangulation of
$T_h(\Omega_f^n) $ is given by
\begin{equation*}\label{Nh}
{\cal N}_h^n=\{x_i^n=\cA_h(\hat \bx_i, t^n)| i=1:n_h, \hat x_i\in {\mathcal{\hat N}}_h\}.   
\end{equation*}
Therefore, $T_h(\Omega_f^n)$ is obtained accordingly.  Since the grid points are moved according to $\cA_h$, we know that no interpolation is
needed for evaluating the material derivative $D_t\bv$ at grid points.

We define the finite element spaces for the fluid velocity and pressure on the triangulation $T_h(\Omega_f^n)$:
$$
\mathbb{V}_{h,f}^{n}=\{ \bv \in H_D^1(\Omega_f^n): \bv|_{\tau}\in
\mathcal{P}_m,\forall \tau\in T_h(\Omega_f^n)\},
$$
and
$$
\mathbb{Q}_{h}^n=\{q\in L^2(\Omega_f^n): q|_{\tau}\in
\mathcal{P}_l,\forall\tau\in T_h(\Omega_f^n)\},
$$
where $m$ and $l$ denote the orders of finite elements.
 

\paragraph{Global finite element space}
We define the finite element approximation of \eqref{spaceV} as
follows:
$$
\mathbb{V}_h^{n+1}:=\{(\bv_f,\hat \bv_s): \bv_f\in
\mathbb{V}_{h,f}^{n+1},~~ \hat \bv_s\in \hat {\mathbb V}_{h,s},~~
\bv_f\circ \bx_{h,s}^{n+1}=\hat\bv_s, \text{ on }\hat\Gamma\}.
$$
Note that the space is for both velocity unknowns and the test functions in the variational problem.

\begin{description}
\item[ Modified GCE finite element scheme:] Find $(\bv_f^{n+1},\hat\bv_s^{n+1})\in \mathbb{V}_h^{n}$ and $p\in\mathbb{Q}_h^{n}$ such that for all $(\bphi,\hat\bphi)\in \mathbb{V}_h^{n}$ and $q\in \mathbb{Q}_h^{n}$,

\begin{equation}
\label{eq:fsi_mgce_fem}
\left\{
\begin{aligned}
\frac{1}{k} ( \rho_f\bv_f^{n+1},\bphi )_{\Omega_f}+\frac{1}{k} ( \hat\rho_s \hat\bv_s^{n+1},\hat\bphi )_{\hat\Omega_s}+&(\bsigma_f^{n+1},\bepsilon(\bphi))_{\Omega_f}\\
+k(\tilde\bP_s(\hat\bv^{n+1}_s),\nabla  \hat\bphi)_{\hat\Omega_s} &=\langle \tilde g_f,\bphi\rangle_{\Omega_f}+\langle \tilde{g}_s,\hat\bphi\rangle_{\hat\Omega_s}, \\
(\nabla\cdot \bv^{n+1}_f,q)_{\Omega_f}&=0,\\
\end{aligned}
\right.
\end{equation}
\end{description}

\begin{remark}
\begin{itemize}
\item GCE can be used not only in weakly coupled explicit algorithms for FSI, but also in fixed-point iteration to achieve strong coupling.

Newton's method can also be used to linearize the FI scheme \cite{Fernandez.M;Moubachir.M2005a}, where shape derivatives have to be calculated.  We do not consider this type of discretization in this paper.
\item
There are many different approaches to enforce interface conditions. Many of them use Lagrange multipliers \cite{Crosetto.P;Deparis.S;Fourestey.G;Quarteroni.A2010a,Crosetto.P;Reymond.P;Deparis.S;Kontaxakis.D;Stergiopulos.N;Quarteroni.A2011a} and this introduces additional degrees of freedom.  An approach to avoiding Lagrange multipliers is to consider velocity and displacement in the entire domain \cite{Turek.S;Hron.J2006a,Hron.J;Turek.S2006a,Dunne.T;Rannacher.R;Richter.T2010a}. The velocity in the structure domain is naturally the time derivative of structure displacement, while the displacement in the fluid domain is the mesh displacement \cite{Hron.J;Turek.S2006a}. In \cite{Quaini.A;Quarteroni.A2007a,Badia.S;Quaini.a;Quarteroni.a2008b,Badia.S;Quaini.A;Quarteroni.A2008a}, fluid velocity, pressure, and structure velocity are considered as unknowns.  In our approach, we also use this velocity-pressure formulation of FSI to facilitate our analysis.
\end{itemize}
\end{remark}

In the next section, we start our theoretical analysis based on the formulation in (\ref{eq:fsi_mgce}) and (\ref{eq:fsi_mgce_fem}).

\subsection{Reformulation as a saddle point problem}

 For brevity, we do not keep the superscript $n$ and we use $\mathbb{V}_h$ and $\mathbb{Q}_h$ instead of $\mathbb{V}_h^n$ and $\mathbb{Q}_h^n$.  In this section, we focus on the linear systems resulting from (\ref{eq:fsi_mgce}) and formulate them as saddle point problems.  For the space $\mathbb{V}$,  we assume that $\bx_s=\bx_{h,s}$; namely, $\bx_s$ in the definition of  $\mathbb{V}$ is assumed to be piecewise linear on the triangulation $T_h(\hat\Omega_s)$. As a consequence, $\mathbb{V}_h$ is a subspace of $\mathbb{V}$.  Similarly, $\mathbb{Q}_h\subset\mathbb{Q}$.   For $\bv\in \mathbb{V}$, we use $\bv_f$ and $\hat\bv_s$ to denote its fluid and structure components, respectively. This convention applies to other functions in $\mathbb{V}$, such as $\bu=(\bu_f,\hat\bu_s)\in \mathbb{V}$ and $\bphi=(\bphi_f,\hat\bphi_s)\in \mathbb{V}$. To guarantee the continuity of velocity on interface, we use polynomials of the same order for the fluid velocity and structure velocity.

  We introduce the following definition of the $H^1$ norm for $\bv=(\bv_f,\hat\bv_s)\in \mathbb{V}$:
 $$\|\bv\|_1^2=\|\bv_f\|_{1,\Omega_f}^2+\|\hat\bv_s\|_{1,\hat\Omega_s}^2,$$
and define the following bilinear forms for $\bv=(\bv_f,\hat\bv_s)\in\mathbb{V}$, $\bphi=(\bphi_f,\hat\bphi_s)\in \mathbb{V}$ and $p\in \mathbb{Q}$

\begin{equation*}
\begin{aligned}
a(\bv,\bphi)=&\frac{1}{k}(\rho_f\bv_f,\bphi_f)_{\Omega_f}+\frac{1}{k}(\hat\rho_s\hat\bv_s,\hat\bphi_s)_{\hat\Omega_s}+(\mu_f\bepsilon (\bv_f),\bepsilon ( \bphi_f))_{\Omega_f}\\
&+k(\mu_s\bepsilon (\hat\bv_s),\bepsilon(\hat\bphi_s))_{\hat\Omega_s}+k (\lambda_s\nabla\cdot \hat\bv_s,\nabla\cdot \hat\bphi_s)_{\hat\Omega_s}
\end{aligned}
\end{equation*}
 and
$$b(\bv,p)=(\nabla\cdot \bv_f,p)_{\Omega_f}.$$

 In this paper, we assume the material parameters to be constant within the fluid domain and the structure domain.

With the bilinear forms defined, (\ref{eq:fsi_mgce}) can be reformulated as a saddle point problem:

\begin{description}
\item[]
Find $\bv\in \mathbb{V}$ and  $p\in \mathbb{Q}$  such that
\begin{equation}
\label{eq:saddle}
\left\{
\begin{aligned}
&a(\bv,\bphi)+b(\bphi,p)&=&\langle \tilde g,\bphi\rangle,&\forall& \bphi\in \mathbb{V},\\
&b(\bv,q) &=&0,&\forall& q\in \mathbb{Q},\\
\end{aligned}
\right.
\end{equation}
\end{description}
where $\langle\tilde g,\bphi\rangle=\langle \tilde g_f,\bphi_f\rangle+\langle \tilde g_s,\hat\bphi_s\rangle.$
This type of problems has various applications, for example in Stokes equations and constrained optimization, and is well studied \cite{Brezzi.F;Fortin.M1991a,Girault.V;Raviart.P1986a}.

In order to study the well-posedness of this problem, we need to carefully define norms for $\mathbb{V}$ and $\mathbb{Q}$ as

\begin{equation*}
\begin{aligned}
\mbox{for all }\bv\in \mathbb{V},~~\|\bv\|_V^2&:=a(\bv,\bv)+r\|\nabla\cdot\bv_f\|_{0,\Omega_f}^2,\\
\mbox{for all } q\in\mathbb{Q},~~\|q\|^2_Q&:=r^{-1}\|q\|^2_0,\\
\end{aligned}
\end{equation*}
where 
\begin{equation}
\label{eq:r}
 r=\max\{1,\mu_f, \rho_fk^{-1}, \hat\rho_sk^{-1}, k\mu_s,k\lambda_s\}.
 \end{equation}

It is well known that (\ref{eq:saddle}) is well-posed if the following conditions can be verified \cite{Girault.V;Raviart.P1986a}

\begin{itemize}
\item 
\begin{equation}
\label{eq:brezzi_a}
a(\cdot,\cdot) \mbox{ is bounded and coercive in } \mathbb{Z}:=\{\bv\in\mathbb{V}| \nabla\cdot\bv=0\mbox{ in }\Omega_f\},
\end{equation}
 \item 
 \begin{equation}
 \label{eq:brezzi_b}
 \begin{aligned}
 b(\cdot,\cdot)  \mbox{ is bounded }&\mbox{and satisfies the inf-sup condition } \\
 &\inf_{p\in \mathbb{Q}}\sup_{\bv\in \mathbb{V}}\frac{b(\bv,p)}{\|\bv\|_V\|p\|_Q}\geq \beta>0.\\
 \end{aligned}
 \end{equation}
\end{itemize}
In the rest of the paper, we prove the boundedness and coercivity of $a(\cdot,\cdot)$ and the inf-sup condition of $b(\cdot,\cdot)$ in order to show the well-posedness of saddle point problems, like (\ref{eq:saddle}).

By definition, it is straightforward to prove the conditions on $a(\cdot,\cdot)$ since
 \begin{equation}
 \label{eq:a_coercive}
 a(\bv,\bv)= \|\bv\|_V^2,\quad\forall\bv\in\mathbb{Z}.
 \end{equation}

The boundedness of $b(\cdot,\cdot)$ follows from the definition:
 \begin{equation}
 \label{eq:b_bounded}
 b(\bv,q)\leq \|\nabla\cdot\bv\|_{0,\Omega_f}\|q\|_0\leq  r^{1/2}\|\nabla\cdot\bv\|_{0,\Omega_f}r^{-1/2}\|q\|_0 \leq\|\bv\|_V\|q\|_Q.
  \end{equation}

Now, we need to prove the inf-sup condition of $b(\cdot,\cdot)$.   First, we have the following lemma.

\begin{lemma}[\cite{Bramble.J;Lazarov.R;Pasciak.J2001a}]
\label{lm:stokes_infsup}
Let $\partial\Omega_D\subset\partial\Omega$ satisfy  $|\partial\Omega_D|>0$ and $|\partial\Omega\setminus\partial\Omega_D|>0$. Then there exists a constant $C$ such that
$$\sup_{\bv\in H_D^1(\Omega)}\frac{(\nabla\cdot\bv,q)}{\|\bv\|_{1,\Omega}}\geq C \|q\|_{0,\Omega},\quad\mbox{for all } q\in L^2(\Omega),$$
where $H_D^1(\Omega)=\{\bv\in H^1(\Omega)| \bv(\bx)=0,~~\mbox{ for all }\bx\in\partial\Omega_D\}.$
\end{lemma}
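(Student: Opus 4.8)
The plan is to prove the equivalent statement that the divergence operator $\nabla\cdot:H_D^1(\Omega)\to L^2(\Omega)$ is surjective and admits a bounded right inverse, i.e.\ for every $q\in L^2(\Omega)$ there is a $\bv\in H_D^1(\Omega)$ with $\nabla\cdot\bv=q$ and $\|\bv\|_{1,\Omega}\le C\|q\|_{0,\Omega}$ for a constant $C$ depending only on $\Omega$ and $\partial\Omega_D$. Once such a $\bv$ is available, testing the supremum with it (renaming the supremum variable to $\bw$) gives
\[
\sup_{\bw\in H_D^1(\Omega)}\frac{(\nabla\cdot\bw,q)}{\|\bw\|_{1,\Omega}}\ \ge\ \frac{(\nabla\cdot\bv,q)}{\|\bv\|_{1,\Omega}}=\frac{\|q\|_{0,\Omega}^2}{\|\bv\|_{1,\Omega}}\ \ge\ \frac{1}{C}\,\|q\|_{0,\Omega},
\]
which is exactly the asserted inequality. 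So the whole proof reduces to constructing $\bv$ with a norm bound independent of $q$.

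To build $\bv$, I would split $q=q_0+c$, where $c=\frac{1}{|\Omega|}\int_\Omega q\,d\bx$ is the mean value and $q_0=q-c$ has zero mean. For the zero-mean part one invokes the classical Bogovski\u\i{} (equivalently Ne\v{c}as) result on the existence of a bounded right inverse of the divergence on $H_0^1$: since $\Omega$ (in our application $\Omega_f^n$ or $\hat\Omega_s$, a polygonal/polyhedral, hence Lipschitz, domain) is Lipschitz, there exists $\bv_0\in H_0^1(\Omega)\subset H_D^1(\Omega)$ with $\nabla\cdot\bv_0=q_0$ and $\|\bv_0\|_{1,\Omega}\le C_1\|q_0\|_{0,\Omega}\le C_1\|q\|_{0,\Omega}$.

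The constant part $c$ is where the hypothesis $|\partial\Omega\setminus\partial\Omega_D|>0$ is used. I would first fix, once and for all, a smooth vector field $\bw\in H_D^1(\Omega)$ whose normal trace is supported on and nonnegative along $\partial\Omega\setminus\partial\Omega_D$, so that by the divergence theorem $\int_\Omega\nabla\cdot\bw\,d\bx=\int_{\partial\Omega\setminus\partial\Omega_D}\bw\cdot\bn\,ds>0$; after rescaling, assume this integral equals $1$. Writing $\nabla\cdot\bw=1+g_0$ with $g_0$ of zero mean, apply the Bogovski\u\i{} result again to get $\bv_g\in H_0^1(\Omega)$ with $\nabla\cdot\bv_g=-g_0$; then $\tilde\bw:=\bw+\bv_g\in H_D^1(\Omega)$ satisfies $\nabla\cdot\tilde\bw\equiv 1$ and $\|\tilde\bw\|_{1,\Omega}\le C_2$ for a constant $C_2$ depending only on $\Omega$ and $\partial\Omega_D$. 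Setting $\bv:=\bv_0+c\,\tilde\bw\in H_D^1(\Omega)$ yields $\nabla\cdot\bv=q_0+c=q$, and since $|c|\le|\Omega|^{-1/2}\|q\|_{0,\Omega}$ by Cauchy--Schwarz, $\|\bv\|_{1,\Omega}\le C_1\|q\|_{0,\Omega}+C_2|\Omega|^{-1/2}\|q\|_{0,\Omega}=:C\|q\|_{0,\Omega}$, which finishes the construction and hence the proof.

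\textbf{Main obstacle.} The crux is the Bogovski\u\i/Ne\v{c}as estimate used twice above — the existence of a bounded divergence right inverse on $H_0^1$ with constant depending only on the geometry. This is a nontrivial classical fact (it is precisely where the Lipschitz, or star-shaped-with-respect-to-a-ball, regularity of $\Omega$ enters), so a self-contained write-up would either reproduce Bogovski\u\i's explicit integral-operator construction or, as the cited reference does, simply invoke it; everything else is bookkeeping. A secondary point deserving care is that $H_D^1$ here is vector valued and that, for the FSI application, the constant $C$ must be uniform over the moving, ALE-deformed fluid domains $\Omega_f^n$ — which requires a uniform-Lipschitz / shape-regularity assumption on that family of domains, implicit in the paper's hypotheses.
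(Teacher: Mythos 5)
The paper does not prove this lemma at all: it is quoted verbatim from the cited reference \cite{Bramble.J;Lazarov.R;Pasciak.J2001a}, so there is no in-paper argument to compare against. Your proposal is a correct and essentially standard proof of the quoted result, and it follows the same route one finds in the literature: reduce the inf-sup bound to a bounded right inverse of the divergence on $H_D^1(\Omega)$, split $q$ into its mean-free part (handled by the Bogovski\u\i/Ne\v{c}as right inverse on $H_0^1(\Omega)$, valid on Lipschitz domains) and its mean value (handled by a fixed corrector field $\tilde\bw\in H_D^1(\Omega)$ with $\nabla\cdot\tilde\bw\equiv 1$, whose existence uses $|\partial\Omega\setminus\partial\Omega_D|>0$), and then test the supremum with the constructed field. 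The bookkeeping ($\|q_0\|_0\le\|q\|_0$, $|c|\le|\Omega|^{-1/2}\|q\|_0$) is right.

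Two small points worth flagging. First, your construction of the flux-carrying field $\bw$ (``smooth, normal trace supported on $\partial\Omega\setminus\partial\Omega_D$'') tacitly assumes that $\partial\Omega\setminus\partial\Omega_D$ contains a relatively open boundary portion; with only the measure-theoretic hypothesis $|\partial\Omega\setminus\partial\Omega_D|>0$ as literally stated, that construction needs either this mild extra structure (which holds in the paper's setting, where $\partial\Omega_D$ is a union of boundary pieces) or an alternative argument such as extending $\Omega$ across the free boundary part and applying Bogovski\u\i\ on the enlarged domain. Second, your closing remark is apt and matches how the lemma is actually invoked in Lemma~\ref{lm:infsup_b_ale} and Corollary~\ref{coro:infsup_b_fem}: uniformity of $C$ over the ALE-deformed domains $\Omega_f^n$ is not automatic from the lemma itself, and in the paper that dependence is tracked separately through the constants $d_0$, $d_1$ of \eqref{eq:d0d1} rather than hidden in $C$.
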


The following lemma is the key ingredient in proving the well-posedness of (\ref{eq:saddle}).  In this case, the fluid domain is deformed due to the motion of the structure.  In the  GCE scheme, $\bx_s$ is treated explicitly and the inf-sup constant depends on $\bx_s$.

\begin{lemma}
\label{lm:infsup_b_ale}
Assume that $$\bx_s\in W^{1,\infty}(\hat\Omega_s)\quad\mbox{and} \quad \inf_{\hat\bx\in \hat\Omega_s}\det (\nabla\bx_s(\hat\bx)) >0.$$
Then the following inf-sup condition holds
$$
\inf_{q\in\mathbb{Q}}\sup_{\bv\in\mathbb{V}}\frac{b(\bv,q)}{\|\bv\|_1\|q\|_0}\gtrsim \frac{1}{d_0^{N/2+1}d_1},
$$
where  
\begin{equation}
\label{eq:d0d1}
d_0=\max\left\{\sup_{\hat\bx\in \hat\Gamma}\|\nabla\bx_s(\hat\bx)\|_{2},1\right\},\quad d_1=\max\left\{\sup_{\hat\bx\in \hat\Gamma}\left\{\det(\nabla\bx_s(\hat\bx))^{-1}\right\}, 1\right\}.
\end{equation}
\end{lemma}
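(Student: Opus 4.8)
The plan is to reduce the inf-sup condition on the coupled space $\mathbb{V}$ to the classical Stokes inf-sup condition on the pure-fluid space $H_D^1(\Omega_f)$ (Lemma \ref{lm:stokes_infsup}), paying a price only in the constant that is controlled by the geometry of the ALE map $\bx_s$. The point is that $b(\bv,q)=(\nabla\cdot\bv_f,q)_{\Omega_f}$ involves only the fluid component, but the velocity space $\mathbb{V}$ constrains $\bv_f$ on $\hat\Gamma$ through $\bv_f\circ\bx_s=\hat\bv_s$; so for a given $q$ I cannot simply take the optimal Stokes velocity $\bv_f^\ast\in H_D^1(\Omega_f)$, because it need not extend to an admissible pair. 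The fix is a lifting/correction argument.

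First, given $q\in\mathbb{Q}=L^2(\Omega_f)$, apply Lemma \ref{lm:stokes_infsup} (with $\partial\Omega_D=\partial\Omega\cap\partial\Omega_f$, which has positive measure, as does its complement in $\partial\Omega_f$ since $\Gamma$ does) to obtain $\bw\in H_D^1(\Omega_f)$ with $(\nabla\cdot\bw,q)_{\Omega_f}\geq C\|q\|_{0,\Omega_f}\|\bw\|_{1,\Omega_f}$; normalize so that $\|\bw\|_{1,\Omega_f}\le 1$ and $(\nabla\cdot\bw,q)\gtrsim\|q\|_0$. This $\bw$ has a nonzero trace on $\Gamma$, so $(\bw,0)\notin\mathbb{V}$. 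Second, I correct it: let $\hat\bv_s\in H_D^1(\hat\Omega_s)$ be a suitable extension into the structure domain of the interface trace $\bw_f\circ\bx_s|_{\hat\Gamma}$ — using a bounded trace-extension (Stein-type) operator on $\hat\Omega_s$ — so that the pair $(\bw,\hat\bv_s)\in\mathbb{V}$. Then $b((\bw,\hat\bv_s),q)=(\nabla\cdot\bw,q)_{\Omega_f}\gtrsim\|q\|_0$ is unchanged, and it remains to bound $\|(\bw,\hat\bv_s)\|_1^2=\|\bw\|_{1,\Omega_f}^2+\|\hat\bv_s\|_{1,\hat\Omega_s}^2$ from above by a constant depending only on $d_0,d_1$.

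The estimate on $\|\hat\bv_s\|_{1,\hat\Omega_s}$ is where the geometric constants enter, and it is the main obstacle. I need: (i) the boundedness of the trace-extension operator on $\hat\Omega_s$ (independent of $\bx_s$); (ii) control of $\|\bw_f\circ\bx_s\|$ on $\hat\Gamma$ in terms of $\|\bw_f\|$ on $\Gamma$ — this is a change-of-variables estimate on the $(N-1)$-dimensional interface, which brings in $\|\nabla\bx_s\|_{L^\infty}$ (hence $d_0$) and the surface Jacobian of $\bx_s|_{\hat\Gamma}$; and (iii) pulling back from the trace norm on $\Gamma$ to an $H^1(\Omega_f)$ bound, i.e.\ $\|\bw_f\|_{H^{1/2}(\Gamma)}\lesssim\|\bw_f\|_{1,\Omega_f}$ by the trace theorem on $\Omega_f$. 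Tracking how the $W^{1,\infty}$ norm of $\bx_s$ and the lower bound on $\det\nabla\bx_s$ propagate through the change of variables — both in the fractional ($H^{1/2}$) trace norm and after the extension into $\hat\Omega_s$ — is the delicate bookkeeping; the exponent $d_0^{N/2+1}d_1$ should come out of combining an $L^\infty$ bound on the Jacobian matrix (one power per derivative, plus powers from the measure distortion $\det\nabla\bx_s \sim d_0^N$ appearing under a square root in the $L^2$-part of the norm) with the single inverse-Jacobian factor $d_1$ from the surface change of variables. Finally, combining the lower bound $b((\bw,\hat\bv_s),q)\gtrsim\|q\|_0$ with the upper bound $\|(\bw,\hat\bv_s)\|_1\lesssim d_0^{N/2+1}d_1$ and taking the sup over $\bv\in\mathbb{V}$, then the inf over $q\in\mathbb{Q}$, yields the claimed inf-sup constant $\gtrsim d_0^{-(N/2+1)}d_1^{-1}$.

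A remark on the hypotheses: the assumption $\inf_{\hat\Omega_s}\det\nabla\bx_s>0$ guarantees $\bx_s$ is a bi-Lipschitz homeomorphism (so the pulled-back fluid domain is non-degenerate and the surface change of variables on $\hat\Gamma$ is legitimate), while $\bx_s\in W^{1,\infty}$ makes the composition $\bw_f\circ\bx_s$ an $H^1$ function with the expected norm bound. Note that only the values of $\bx_s$ and $\nabla\bx_s$ on $\hat\Gamma$ appear in $d_0,d_1$, consistent with the fact that the correction $\hat\bv_s$ only needs to match a trace on $\hat\Gamma$; the interior behaviour of the ALE map in $\hat\Omega_f$ is irrelevant here.
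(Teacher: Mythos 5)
Your architecture is the same as the paper's: given $q$, take the Stokes inf-sup velocity $\bv_f\in H^1_D(\Omega_f)$ from Lemma \ref{lm:stokes_infsup}, lift the pulled-back interface trace $\bv_f\circ\bx_s|_{\hat\Gamma}$ into $\hat\Omega_s$ (the paper uses the harmonic extension (\ref{eq:harmonic_extension}) rather than a generic lifting), note that $b(\cdot,q)$ only sees the fluid component, and charge the entire geometric price to the bound on the structure component. The problem is that the whole content of the lemma is the quantitative step you defer as ``delicate bookkeeping,'' and your sketch of it has two genuine gaps. First, a plain bounded lifting of $\bv_f\circ\bx_s$ from $\hat\Gamma$ into $\hat\Omega_s$ is not enough: the structure component must lie in $H^1_D(\hat\Omega_s)$, i.e.\ vanish on $\partial\hat\Omega_s\setminus\hat\Gamma$, so the interface datum has to be controlled in $H^{1/2}_{00}(\hat\Gamma)$ (its extension by zero must remain in $H^{1/2}(\partial\hat\Omega_s)$), and the change-of-variables estimate must be performed in that norm, including the weighted term $\int_{\hat\Gamma}|\bv_f\circ\bx_s|^2/\mbox{dist}(\hat\bx,\partial\hat\Gamma)\,d\hat s$, for which one needs the elementary bound $\mbox{dist}(\bx_s(\hat\bx),\partial\Gamma)\le d_0\,\mbox{dist}(\hat\bx,\partial\hat\Gamma)$. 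The paper does exactly this with the intrinsic $H^{1/2}_{00}$ seminorm, the norm equivalence from \cite{Xu.J;Zou.J1998a}, and the harmonic-extension bound $\|\hat\bv_s\|_{1,\hat\Omega_s}\lesssim\|\hat\bv_s\|_{1/2,\partial\hat\Omega_s}$; your ``Stein-type'' extension does not by itself respect the homogeneous condition on $\partial\hat\Omega_s\setminus\hat\Gamma$.

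Second, your accounting of where the exponents come from is not correct, so the stated constant $d_0^{-(N/2+1)}d_1^{-1}$ is asserted rather than derived. The factor $d_0^{N}$ does not arise from a ``measure distortion $\det\nabla\bx_s\sim d_0^N$'': it comes from distorting the kernel $|\hat\bx-\hat\by|^{-N}$ in the double integral, via $|\bx_s(\hat\bx)-\bx_s(\hat\by)|\le d_0|\hat\bx-\hat\by|$. Separately, each change of surface measure between $\hat\Gamma$ and $\Gamma$ (Nanson's formula, $ds\le\det(\nabla\bx_s)\,\|(\nabla\bx_s)^{-1}\|_2\,d\hat s$ and its counterpart for the inverse map) contributes a factor bounded by $d_0d_1$, hence $d_0^2d_1^2$ in the double integral and $d_0d_1$ in the single weighted integral and in the $L^2(\hat\Gamma)$ term. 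Combining gives $\|\bv_f\circ\bx_s\|^2_{H^{1/2}_{00}(\hat\Gamma)}\le d_0^{N+2}d_1^2\,\|\bv_f\|^2_{H^{1/2}_{00}(\Gamma)}$, then the trace theorem on $\Omega_f$ and the extension bound yield $\|\bv\|_1\lesssim d_0^{N/2+1}d_1\|\bv_f\|_{1,\Omega_f}$, which is where the exponent $N/2+1$ and the single power of $d_1$ actually come from. Until this computation is carried out (in the $H^{1/2}_{00}$ norm, not just $H^{1/2}$), the proposal is a correct plan but not a proof of the stated inf-sup constant.
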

Note that $N=2,3$ is the dimension of the FSI problem and  $\|\nabla\bx_s\|_2$ is the induced matrix 2-norm.
\begin{proof}

Given $q\in \mathbb{Q}=L^2(\Omega_f)$, we can find $\bv_f\in H_D^1(\Omega_f)=\{\bv\in H^1(\Omega_f) \mbox{ and } \bv_f|_{\partial\Omega_f\cap\partial\Omega}=0\}$ such that 

$$\frac{(\nabla\cdot\bv_f,q)_{\Omega_f}}{\|\bv_f\|_{1,\Omega_f}\|q\|_0}\gtrsim 1.$$

Then, we take $\hat\bv_s\in\hat{\mathbb{V}}_{h,s}$ satisfying $\bv_f\circ \bx_s=\hat\bv_s$ on $ \hat\Gamma$ and
\begin{equation}
\label{eq:harmonic_extension}
\int_{\hat\Omega_s}\nabla\hat\bv_s:\nabla\bphi=0,\quad\mbox{ for all } \bphi\in H_0^1(\hat\Omega_s).
\end{equation}

Then, we know that $\bv:=(\bv_f,\hat\bv_s)\in \mathbb{V}_h$ and $\|\hat\bv_s\|_{1,\hat\Omega_s}\lesssim \|\hat\bv_s\|_{1/2,\partial\hat\Omega_s}$.

%
%

The structure flow map $\bx_s$ maps from $\hat\Gamma$ to $\Gamma$. By Nanson's formula \cite{Bazilevs.Y;Takizawa.K;Tezduyar.Ta}, the following inequality about surface elements $ds$ and $d\hat s$ holds
$$ds(\bx_s(\hat\bx))\leq \det(\nabla\bx_s)\|(\nabla\bx_s)^{-1}\|_2d\hat s(\hat \bx).$$

Given $\bx,\by\in \Gamma$, $|\bx-\by|$ denotes the distance between $\bx$ and $\by$ on $\Gamma$.  It is easy to verify that

$$|\bx_s(\hat\bx)-\bx_s(\hat\by)|\leq \sup_{\bz\in\Gamma}\|\nabla\bx_s(\bz)\|_{2}|\bx-\by|\leq d_0|\bx-\by|$$
and, accordingly,
$$\mbox{dist}(\bx_s(\hat\bx),\Gamma)=\inf_{\by\in \Gamma}|\bx_s(\hat\bx)-\by|=\inf_{\hat \by\in\hat \Gamma}|\bx_s(\hat\bx)-\bx_s(\hat\by)|\leq d_0\inf_{\hat\by\in \hat \Gamma}|\hat\bx-\hat\by|=d_0\mbox{dist}(\hat\bx,\hat \Gamma).$$

The integral on the interface $\hat\Gamma$ can be estimated as follows
\begin{equation*}
\begin{aligned}
&|\bv_f\circ \bx_s|^2_{H_{00}^{1/2}(\hat\Gamma)}\\
=&\int_{\hat\Gamma}\int_{\hat\Gamma}\frac{|\bv_f\circ\bx_s(\hat \bx)-\bv_f\circ\bx_s(\hat \by)|^2}{|\hat\bx-\hat\by|^N}d\hat s(\hat\bx)d\hat s(\hat\by)+\int_{\hat\Gamma}\frac{|\bv_f\circ\bx_s(\hat\bx)|^2}{\mbox{dist}(\hat\bx,\partial\hat\Gamma)}d\hat s(\hat\bx)\\
=&\int_{\hat\Gamma}\int_{\hat\Gamma}\frac{|\bv_f\circ\bx_s(\hat \bx)-\bv_f\circ\bx_s(\hat\by)|^2}{|\bx_s(\hat\bx)-\bx_s(\hat\by)|^N}\frac{|\bx_s(\hat\bx)-\bx_s(\hat\by)|^N}{|\hat\bx-
\hat\by|^N}d\hat s(\hat\bx)ds(\hat\by)\\
&+\int_{\hat\Gamma}\frac{|\bv_f\circ\bx_s(\hat\bx)|^2}{\mbox{dist}(\bx_s(\hat\bx),\partial\Gamma)}\frac{\mbox{dist}(\bx_s(\hat\bx),\partial\Gamma)}{\mbox{dist}(\hat\bx,\partial\hat\Gamma)}d\hat s(\hat\bx)\\
\leq&d_0^N\int_{\hat\Gamma}\int_{\hat\Gamma}\frac{|\bv_f\circ\bx_s(\hat\bx)-\bv_f\circ\bx_s(\hat\by)|^2}{|\bx_s(\hat\bx)-\bx_s(\hat\by)|^N}d\hat s(\hat\bx)d\hat s(\hat\by)+d_0\int_{\hat\Gamma}\frac{|\bv_f\circ\bx_s(\hat\bx)|^2}{\mbox{dist}(\bx_s(\hat\bx),\partial\Gamma)}d\hat s(\hat\bx)\\
\leq&d_0^N\int_{\Gamma}\int_{\Gamma}\frac{|\bv_f(\bx)-\bv_f(\by)|^2}{|\bx-\by|^N}\det(\nabla\bx_s)^{-2}\|\nabla\bx_s\|_2^{2}ds(\bx)ds(\by)\\
&+d_0\int_{\Gamma}\frac{|\bv_f(\bx)|^2}{\mbox{dist}(\bx,\partial \Gamma)}\det(\nabla\bx_s)^{-1}\|\nabla\bx_s\|_2ds(\bx)\\
\leq& d_0^{N+2}d_1^2\int_{\Gamma}\int_{\Gamma}\frac{|\bv_f(\bx)-\bv_f(\by)|^2}{|\bx-\by|^N}ds(\bx)ds(\by)+ d_0^{2}d_1\int_{\Gamma}\frac{|\bv_f(\bx)|^2}{\mbox{dist}(\bx,\partial\Gamma)}ds(\bx)\\
\end{aligned}
\end{equation*}
and
$$\|\bv_f\circ\bx_s\|_{L^2(\hat\Gamma)}^2\leq d_0d_1 \|\bv_f\|^2_{L^2(\Gamma)}.$$
Therefore, 
$$\|\bv_f\circ\bx_s\|_{H_{00}^{1/2}(\hat\Gamma)}^2\leq d_0^{N+2}d_1^2 \|\bv_f\|^2_{H_{00}^{1/2}(\Gamma)}.$$

Based on the intrinsic definition of the semi norm
$$|\bv_f|^2_{H_{00}^{1/2}(\Gamma)}= \int_{\Gamma}\int_{\Gamma}\frac{|\bv_f(\bx)-\bv_f(\by)|^2}{|\bx-\by|^n}ds(\bx)ds(\by)+\int_{\Gamma}\frac{|\bv_f|^2}{\mbox{dist}(x,\partial\Gamma)}ds(x),$$
we know that \cite{Xu.J;Zou.J1998a}
$$| \bv_f\circ\bx_s|_{1/2,\partial\hat\Omega_f}\ec |\bv_f\circ\bx_s|_{H_{00}^{1/2}(\hat\Gamma)}\ec |\hat\bv_s|_{1/2,\partial\hat\Omega_s}.$$
Then 
$$\|\hat\bv_s\|^2_{1,\hat\Omega_s}\lesssim \|\hat\bv_s\|^2_{1/2,\partial\hat\Omega_s}\lesssim\|\bv_f\circ\bx_s\|^2_{H_{00}^{1/2}(\hat\Gamma)}\lesssim d^{N+2}_0d_1^2\|\bv_f\|^2_{1/2,\partial\Omega_f}\lesssim  d^{N+2}_0d^2_1\|\bv_f\|_{1,\Omega_f}.$$

Therefore, we have
$$\| \bv\|^2_1\lesssim d_0^{N+2}d_1^2\|\bv_f\|_{1,\Omega_f}^2$$
and

$$\frac{(\nabla\cdot \bv_f,q)_{\Omega_f}}{\| \bv\|_1\|q\|_0}\gtrsim  \frac{1}{d_0^{N/2+1}d_1}.$$

This finishes the proof.
\end{proof}

With the inf-sup condition of $b(\cdot,\cdot)$ proved, the well-posedness of (\ref{eq:saddle}) is shown.

\begin{theorem}
\label{thm:wellposed_continuous}
Assume that at a given time step $t^n$, 
there exist positive constants $C_0$ and $C_1$ such that 
$$\sup_{\hat\bx\in \hat\Gamma}\|\nabla\bx_s(\hat\bx)\|_{2}\leq C_0, \quad \sup_{\hat\bx\in \hat\Gamma}\left\{\det(\nabla\bx_s(\hat\bx))^{-1}\right\}\leq C_1,$$
where the positive constants $C_0$ and $C_1$ are independent of material parameters and time step sizes. Then, under the norms $\|\cdot\|_V$ and $\|\cdot\|_Q$, the variational problem (\ref{eq:saddle}) is uniformly well-posed  with respect to material parameters and time step sizes.
\end{theorem}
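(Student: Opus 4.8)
The plan is to verify the Brezzi conditions \eqref{eq:brezzi_a} and \eqref{eq:brezzi_b} with constants independent of the material parameters $\mu_f,\mu_s,\lambda_s,\rho_f,\hat\rho_s$ and the time step $k$, and then invoke the standard saddle point theory of \cite{Girault.V;Raviart.P1986a}. Most of the work has already been done in the excerpt: the coercivity \eqref{eq:a_coercive} gives $a(\bv,\bv)=\|\bv\|_V^2$ for all $\bv\in\mathbb{Z}$, which is exact coercivity with constant $1$, and boundedness of $a(\cdot,\cdot)$ on $\mathbb{Z}\times\mathbb{Z}$ (indeed on all of $\mathbb{V}\times\mathbb{V}$) follows from Cauchy--Schwarz applied termwise together with $a(\bv,\bv)\le\|\bv\|_V^2$. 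Likewise \eqref{eq:b_bounded} gives boundedness of $b(\cdot,\cdot)$ with constant $1$. So the only nontrivial point is the inf-sup condition for $b(\cdot,\cdot)$ in the $\|\cdot\|_V$--$\|\cdot\|_Q$ norms.

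The key step is to convert the $\|\cdot\|_1$--$\|\cdot\|_0$ inf-sup estimate of Lemma~\ref{lm:infsup_b_ale} into one in the scaled norms. Given $q\in\mathbb{Q}$, Lemma~\ref{lm:infsup_b_ale} produces $\bv\in\mathbb{V}$ with
$$
b(\bv,q)\;\gtrsim\;\frac{1}{d_0^{N/2+1}d_1}\,\|\bv\|_1\,\|q\|_0 .
$$
Under the hypotheses $\sup_{\hat\Gamma}\|\nabla\bx_s\|_2\le C_0$ and $\sup_{\hat\Gamma}\det(\nabla\bx_s)^{-1}\le C_1$ with $C_0,C_1$ independent of parameters and $k$, the quantities $d_0,d_1$ from \eqref{eq:d0d1} are bounded by constants independent of the parameters and $k$, so the inf-sup constant in the $\|\cdot\|_1$--$\|\cdot\|_0$ norms is bounded below by an absolute constant $\beta_0>0$. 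It then remains to compare $\|\bv\|_V$ with $\|\bv\|_1$ and $\|q\|_Q$ with $\|q\|_0$. From the definition $\|\bv\|_V^2=a(\bv,\bv)+r\|\nabla\cdot\bv_f\|_{0,\Omega_f}^2$ and the choice $r=\max\{1,\mu_f,\rho_fk^{-1},\hat\rho_sk^{-1},k\mu_s,k\lambda_s\}$, each term of $a(\bv,\bv)$ is bounded by (a constant times) $r\|\bv\|_1^2$ — using Korn's inequality to control $\|\bepsilon(\bv_f)\|_0$ and $\|\bepsilon(\hat\bv_s)\|_0$ by $\|\bv_f\|_{1,\Omega_f}$ and $\|\hat\bv_s\|_{1,\hat\Omega_s}$ — and $r\|\nabla\cdot\bv_f\|_0^2\le Nr\|\bv_f\|_{1,\Omega_f}^2$, so $\|\bv\|_V\lesssim r^{1/2}\|\bv\|_1$. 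On the pressure side $\|q\|_Q=r^{-1/2}\|q\|_0$. Hence
$$
\frac{b(\bv,q)}{\|\bv\|_V\|q\|_Q}\;\gtrsim\;\frac{r^{-1/2}\,\|\bv\|_1\,\|q\|_0}{r^{-1/2}\,\|\bv\|_1\,\|q\|_0}\cdot\beta_0\;=\;\beta_0\,,
$$
the two powers of $r$ cancelling exactly; this is the crucial scaling observation that makes the estimate uniform. Taking the sup over $\bv$ and the inf over $q$ gives \eqref{eq:brezzi_b} with a parameter-independent $\beta$.

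With both Brezzi conditions established with constants independent of the parameters and $k$, the classical theory (e.g.\ \cite[Ch.~II]{Girault.V;Raviart.P1986a} or the framework of \cite{Mardal.K;Winther.R2011a,Zulehner.W2011a}) yields existence, uniqueness, and a stability bound $\|\bv\|_V+\|q\|_Q\lesssim \|\tilde g\|_{\mathbb{V}'}$ with constant depending only on the (absolute) boundedness and inf-sup constants, i.e.\ uniform well-posedness. I expect the main obstacle to be purely bookkeeping: checking that the upper bound $\|\bv\|_V\lesssim r^{1/2}\|\bv\|_1$ really holds termwise for \emph{every} constituent of $a$ against the single scaling factor $r$ (in particular that the Korn constants and the reference-to-current-domain change-of-variables constants entering $\hat\rho_s$ do not secretly reintroduce parameter dependence), and symmetrically that no term of $a(\bv,\bv)$ is so small that the cancellation of $r$ fails — but since the inf-sup test function only needs $a(\bv,\bv)$ and $\|\nabla\cdot\bv_f\|_0$ bounded \emph{above} by $r\|\bv\|_1^2$, and $\|q\|_Q$ is exactly $r^{-1/2}\|q\|_0$, the cancellation is clean and no lower bound on individual terms of $a$ is needed.
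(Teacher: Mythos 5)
Your proposal is correct and follows essentially the same route as the paper's proof: verify Brezzi's conditions via the identity \eqref{eq:a_coercive}, the bound \eqref{eq:b_bounded}, the scaling estimate $\|\bv\|_V\lesssim r^{1/2}\|\bv\|_{1}$ from \eqref{eq:Vr}, and Lemma~\ref{lm:infsup_b_ale} with $d_0\le\max\{C_0,1\}$, $d_1\le\max\{C_1,1\}$, so that the $r^{1/2}$ and $r^{-1/2}$ factors cancel in the inf-sup quotient. (A trivial remark: bounding $\|\bepsilon(\bv)\|_0$ by $\|\bv\|_1$ is immediate and does not require Korn's inequality, which is only needed in the reverse direction.)
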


\begin{proof}
We prove this theorem by verifying the Brezzi's conditions (\ref{eq:brezzi_a}) and (\ref{eq:brezzi_b}). 

The boundedness and coercivity of $a(\cdot,\cdot)$ are shown by (\ref{eq:a_coercive}) and the boundedness of $b(\cdot,\cdot)$ is shown by (\ref{eq:b_bounded}).  Therefore, we only need to prove the inf-sup condition of $b(\cdot,\cdot)$.

Due to the choice of the parameter $r$,  the following inequality holds
\begin{equation}
\label{eq:Vr}
\| \bv\|_{V}\lesssim r^{1/2}\| \bv\|_{1,\Omega},\quad\forall\bv\in\mathbb{V}.
\end{equation}

Based on Lemma \ref{lm:infsup_b_ale}, it indicates that
$$\inf_{q\in \mathbb{Q}}\sup_{\bv\in \mathbb{V}}\frac{(\nabla\cdot \bv,q)_{\Omega_f}}{\| \bv\|_V\|q\|_Q}\gtrsim  \frac{1}{d_0^{N/2+1}d_1}.$$
Since $d_0\leq \max\{C_0,1\}$, $d_1\leq\max\{C_1,1\}$ and $C_0$ and $C_1$ are independent of material parameters and time step sizes,  the inf-sup constant is uniformly bounded below.  Therefore, we have shown that (\ref{eq:saddle}) is uniformly well-posed with respect to material parameters $\rho_f$, $\hat\rho_s$, $\mu_f$, $\mu_s$ and $\lambda_s$ and time step size $k$.
\end{proof}

\subsubsection*{Applications in unsymmetric cases}

In the GCE scheme we are considering, convection terms are treated explicitly using (\ref{eq:explicit_convection}).  A more stable discretization is to linearize convection terms by Newton's method.  This adds unsymmetric terms to the variational problem

$$c(\bu,\bv)= \int_{\Omega_f}\rho_f(\bw\cdot\nabla)\bu_f\cdot \bv_f+\int_{\Omega_f}\rho_f(\bu_f\cdot\nabla)\bz\cdot \bv_f,$$
where $\bw$ and $\bz$ are functions obtained from previous iteration steps.

 With the new term $c(\bv,\phi)$ added, the following variational problem is also well-posed under certain assumptions
\begin{description}
\item[]
Find $\bv\in \mathbb{V}$ and  $p\in \mathbb{Q}$  such that
\begin{equation}
\label{eq:saddle_ac}
\left\{
\begin{aligned}
&a(\bv,\phi)+c(\bv,\phi)+b(\phi,p)&=&\langle \tilde f,\phi\rangle,&\forall& \phi\in \mathbb{V},\\
&b(\bv,q) &=&0,&\forall& q\in \mathbb{Q}.\\
\end{aligned}
\right.
\end{equation}
\end{description}

The well-posedness of (\ref{eq:saddle_ac}) requires the boundedness and coercivity of $a(\bu,\bv)+c(\bu,\bv)$.

First we have

\begin{equation*}
\begin{aligned}
\int_{\Omega_f}\rho_f(\bw\cdot\nabla)\bu_f\cdot \bv_f \leq C \left(\frac{k\rho_f}{\mu_f}\right)^{1/2}\|\bw\|_{\infty}\| \bu\|_{V}\|\bv\|_V\\ 
\end{aligned}
\end{equation*}
and
\begin{equation*}
\begin{aligned}
\int_{\Omega_f}\rho_f(\bu_f\cdot\nabla)\bz\cdot \bv_f \leq k \|\nabla\bz\|_{\infty}\|\bu\|_V\|\bv\|_V.\\
\end{aligned}
\end{equation*}

Then 
\begin{equation}
\label{eq:c_bounded}
c(\bu,\bv)\leq \left(  C\left(k\rho_f/\mu_f\right)^{1/2}\|\bw\|_\infty +  k\|\nabla\bz\|_\infty\right)\|\bu\|_V\|\bv\|_V.
\end{equation}


Assume $k$ is small enough such that

$$C\left(k\rho_f/\mu_f\right)^{1/2}\|\bw\|_\infty +  k\|\nabla\bz\|_\infty\leq c_0<1,$$ 
where $0<c_0<1$ is a constant.

Then we have the boundedness and coercivity  of $a(\bu,\bv)+c(\bu,\bv)$

\begin{equation}
\label{eq:ac_Velliptic}
\begin{aligned}
a(\bu,\bu)+c(\bu,\bu)\geq& (1-c_0)\|\bu\|_V^2,\quad\forall \bu\in \mathbb{V},\\
a(\bu,\bv)+c(\bu,\bv)\leq& (1+c_0)\|\bu\|_V\|\bv\|_V ,\quad\forall\bu,\bv\in \mathbb{V}.
\end{aligned}
\end{equation}

The boundedness and the inf-sup condition of $b(\cdot,\cdot)$ are not affected by $c(\cdot,\cdot)$. Therefore, the well-posedness of variational problem (\ref{eq:saddle_ac}) follows based on standard arguments. (See Corollary 4.1 in \cite{Girault.V;Raviart.P1986a}.)  We do not show the details here.  Although our study can be applied to unsymmetric case, we only deal with the symmetric cases in the rest of this paper.

  In the next section, we consider the well-posedness of the finite element problem (\ref{eq:fsi_mgce_fem}).

%
%
%
%

\subsection{Well-posedness of finite element discretization}

Since we have already assumed $\mathbb{V}_h\subset\mathbb{V}$ and $\mathbb{Q}_h\subset\mathbb{Q}$, (\ref{eq:fsi_mgce_fem}) can be formulated as follows

\begin{description}
\item[] Find $\bv_h\in \mathbb{V}_h$ and  $p_h\in \mathbb{Q}_h$  such that
\begin{equation}
\label{eq:saddle_fem}
\left\{
\begin{aligned}
&a(\bv_h,\bphi_h)+b(\bphi_h,p_h)&=&\langle \tilde g,\bphi_h\rangle,&\forall& \bphi_h\in \mathbb{V}_h,\\
&b(\bv_h,q_h) &=&0,&\forall& q_h\in \mathbb{Q}_h.\\
\end{aligned}
\right.
\end{equation}
\end{description}

 The well-posedness of this finite element problem can be proved with some additional assumptions

The discrete kernel space is $$\mathbb{Z}_h:=\{\bv_h=(\bv_{h,f},\hat\bv_{h,s})\in \mathbb{V}_h| (\nabla\cdot \bv_{h,f},q_h)_{\Omega_f}=0, ~\mbox{ for all }q_h\in \mathbb{Q}_h\}.$$

As is pointed out in \cite{Xie.X;Xu.J;Xue.G2008a}, for finite element spaces that do not satisfy $\mathbb{Z}_h\subset\mathbb{Z}$, the uniform coercivity of $a(\cdot,\cdot)$ in $\mathbb{Z}_h$ cannot be guaranteed.  In fact, if $$r(\nabla\cdot\bv_f,\nabla\cdot\bv_f)_{\Omega_f}\leq a(\bv,\bv), \quad \mbox{ for all }\bv\in \mathbb{Z}_h$$ 
holds uniformly  with respect to $r$, then it implies that $\nabla\cdot\bv_f=0$ in $\Omega_f$, i.e. $\bv\in \mathbb{Z}$.  However, most commonly used finite element pairs do not satisfy $\mathbb{Z}_h\subset\mathbb{Z}$.  Although there are exceptions like P4-P3 in 2D, the choice is very restricted.   We propose two remedies for this issue: the first is to add a stabilization term to $a(\bu,\bv)$ and the second is to Introduce a new norm for $\mathbb{V}$.

\subsubsection{Remedy 1: Stabilized formulation for finite elements}

The first remedy we propose is to add the stabilization  term proposed in \cite{Xie.X;Xu.J;Xue.G2008a}  
$$\tilde a(\bu,\bv)=a(\bu,\bv)+r(\nabla\cdot\bu_f,\nabla\cdot\bv_f)_{\Omega_f}.$$
Then $\tilde a(\bu,\bv)$ is uniformly coercive in $\mathbb{V}_h$ since 
\begin{equation}
\label{eq:atilde_coercive}
\tilde a(\bu,\bu)\equiv\|\bu\|_V^2,\quad\forall\bu\in \mathbb{V}_h.
\end{equation}
 The stabilization term $r(\nabla\cdot\bu_f,\nabla\cdot\bv_f)_{\Omega_f}$ is one of the key ingredients in our formulation. This term has also been used in \cite{Olshanskii.M;Reusken.A2004a} to stabilize Stokes equations and the effects of this term on discretization error and preconditioning of the linear system are discussed.  Another type of stabilization technique, the \emph{orthogonal subgrid scales} technique, is applied to FSI in \cite{Badia.S;Quaini.a;Quarteroni.a2008b,Badia.S;Quaini.A;Quarteroni.A2008a} to stabilize the Navier-Stokes equations with equal-order velocity-pressure pairs (like P1-P1).  The stabilization parameters of this technique are determined by Fourier analysis in \cite{Codina.R2002a}. 

The new FEM problem is as follows:
\begin{description}
\item[] Find $\bv_h\in \mathbb{V}_h$ and  $p_h\in \mathbb{Q}_h$  such that
\begin{equation}
\label{eq:saddle_fem_stab}
\left\{
\begin{aligned}
&\tilde a(\bv_h,\bphi_h)+b(\bphi_h,p_h)&=&\langle \tilde g,\bphi_h\rangle,&\forall& \bphi_h\in \mathbb{V}_h,\\
&b(\bv_h,q_h) &=&0,&\forall& q_h\in \mathbb{Q}_h.\\
\end{aligned}
\right.
\end{equation}
\end{description}   

For this new formulation, we just need to prove the inf-sup conditions of $b(\cdot,\cdot)$ in order to show that it is well-posed.  Similar to Theorem \ref{thm:wellposed_continuous}, the inf-sup conditions of $b(\cdot,\cdot)$ also depend on $\bx_s$.  Note that $\bx_s$ is the solid trajectory and is calculated based on the solid velocity calculated at previous time steps. Moreover, $\bx_s$ corresponds to mesh motion and thus we assume that $\bx_s$ is piecewise linear on the triangulation.

\begin{coro}
\label{coro:infsup_b_fem}
Assume that $\bx_s$ is continuous and satisfies 

$$\bx_s|_{\tau}\in \mathcal{P}_1, ~~\forall \tau\in T_h(\hat\Omega_s)~~\mbox{ and } \inf_{\hat\bx\in \hat\Omega_s}\det (\nabla \bx_s) >0,$$
and that the finite element pair $(\mathbb{V}_{h,f},\mathbb{Q}_h)$ for the fluid variables satisfies that 
 \begin{equation}
 \label{eq:infsup_f}
 \inf_{q\in\mathbb{Q}_h}\sup_{\bv_f\in \mathbb{V}_{h,f}}\frac{(\nabla\cdot \bv_f,q)_{\Omega_f}}{\|\bv_f\|_1\|q\|_0}\gtrsim 1.
 \end{equation}
  Then the following inf-sup condition holds
\begin{equation}
\label{eq:infsup_b_fem}
\inf_{q\in\mathbb{Q}_h}\sup_{\bv\in \mathbb{V}_h}\frac{b( \bv,q)}{\|\bv\|_1\|q\|_0}\gtrsim \frac{1}{d_0^{N/2+1}d_1}.
\end{equation}
\end{coro}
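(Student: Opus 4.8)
The plan is to mirror the proof of Lemma~\ref{lm:infsup_b_ale}, replacing the two non-algebraic ingredients of that argument by their finite-element counterparts: the Stokes inf-sup condition of Lemma~\ref{lm:stokes_infsup} is replaced by the assumed discrete inf-sup condition (\ref{eq:infsup_f}) for the pair $(\mathbb{V}_{h,f},\mathbb{Q}_h)$, and the harmonic extension into $\hat\Omega_s$ is replaced by a \emph{discrete} harmonic extension. Concretely, given $q\in\mathbb{Q}_h$ I would use (\ref{eq:infsup_f}) to produce $\bv_f\in\mathbb{V}_{h,f}$ with $(\nabla\cdot\bv_f,q)_{\Omega_f}\gtrsim\|\bv_f\|_{1,\Omega_f}\|q\|_0$, then extend its interface trace into the solid mesh: let $\hat\bv_s\in\hat{\mathbb{V}}_{h,s}$ be determined by $\hat\bv_s=\bv_f\circ\bx_s$ on $\hat\Gamma$ together with $(\nabla\hat\bv_s,\nabla\bphi_h)_{\hat\Omega_s}=0$ for all $\bphi_h\in\hat{\mathbb{V}}_{h,s}\cap H^1_0(\hat\Omega_s)$, and set $\bv=(\bv_f,\hat\bv_s)$.

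The first point to verify is that $\bv$ indeed belongs to $\mathbb{V}_h$, and this is exactly where the hypotheses of the corollary enter. Since $\bx_s$ is continuous and affine on each $\tau\in T_h(\hat\Omega_s)$, the meshes are matching on $\hat\Gamma$, and fluid and structure velocities use the same polynomial order, the composition $\bv_f\circ\bx_s$ restricted to $\hat\Gamma$ is a piecewise polynomial of the correct degree on the interface mesh, hence lies in the trace space of $\hat{\mathbb{V}}_{h,s}$ on $\hat\Gamma$; the discrete harmonic extension is therefore well defined and lies in $\hat{\mathbb{V}}_{h,s}$, so $\bv\in\mathbb{V}_h$. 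Moreover the discrete harmonic extension is $H^1$-stable with a constant independent of the mesh size, $\|\hat\bv_s\|_{1,\hat\Omega_s}\lesssim\|\hat\bv_s\|_{1/2,\partial\hat\Omega_s}$; this is the discrete extension estimate of \cite{Xu.J;Zou.J1998a} and uses the standard shape-regularity of the triangulations.

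From this point the computation is word-for-word that of Lemma~\ref{lm:infsup_b_ale}: Nanson's formula together with the Lipschitz bounds $|\bx_s(\hat\bx)-\bx_s(\hat\by)|\le d_0|\bx-\by|$ and $\mbox{dist}(\bx_s(\hat\bx),\Gamma)\le d_0\,\mbox{dist}(\hat\bx,\hat\Gamma)$ give $\|\bv_f\circ\bx_s\|^2_{H_{00}^{1/2}(\hat\Gamma)}\le d_0^{N+2}d_1^2\|\bv_f\|^2_{H_{00}^{1/2}(\Gamma)}$, and then the trace equivalences $|\bv_f\circ\bx_s|_{1/2,\partial\hat\Omega_f}\ec|\bv_f\circ\bx_s|_{H_{00}^{1/2}(\hat\Gamma)}\ec|\hat\bv_s|_{1/2,\partial\hat\Omega_s}$ combined with the discrete harmonic stability yield $\|\hat\bv_s\|^2_{1,\hat\Omega_s}\lesssim d_0^{N+2}d_1^2\|\bv_f\|^2_{1,\Omega_f}$. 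Hence $\|\bv\|_1^2\lesssim d_0^{N+2}d_1^2\|\bv_f\|^2_{1,\Omega_f}$, with $d_0,d_1$ as in (\ref{eq:d0d1}), and
$$\sup_{\bw\in\mathbb{V}_h}\frac{b(\bw,q)}{\|\bw\|_1\|q\|_0}\ge\frac{b(\bv,q)}{\|\bv\|_1\|q\|_0}=\frac{(\nabla\cdot\bv_f,q)_{\Omega_f}}{\|\bv\|_1\|q\|_0}\gtrsim\frac{1}{d_0^{N/2+1}d_1},$$
which is (\ref{eq:infsup_b_fem}) after taking the infimum over $q\in\mathbb{Q}_h$.

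The only step that is not a transcription of the continuous lemma, and the one I would treat most carefully, is the discrete harmonic extension: one must check both that $\bv_f\circ\bx_s|_{\hat\Gamma}$ genuinely lives in the discrete interface trace space — this is precisely why the hypothesis that $\bx_s$ is piecewise linear is essential, and for higher-order structure elements one would instead extend an interpolant such as $\Pi_h^1(\bv_f\circ\bx_s)$ and pay an additional interpolation error — and that the $H^1$-stability constant of the extension does not deteriorate as $h\to0$. Everything else, namely the use of the discrete fluid inf-sup condition and the geometric $d_0,d_1$ bookkeeping, is routine.
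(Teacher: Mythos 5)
Your proposal is correct and follows essentially the same route as the paper's own proof: invoke the assumed discrete fluid inf-sup condition to get $\bv_f^h$, couple it to a discrete harmonic extension $\hat\bv_s^h$ in $\hat{\mathbb{V}}_{h,s}$ with $\hat\bv_s^h=\bv_f^h\circ\bx_s$ on $\hat\Gamma$, and reuse the $d_0$, $d_1$ estimates of Lemma~\ref{lm:infsup_b_ale} to bound $\|\bv^h\|_1$ by $d_0^{N/2+1}d_1\|\bv_f^h\|_{1,\Omega_f}$. Your extra care about the composed trace lying in the discrete trace space (via piecewise linear $\bx_s$ and matching meshes) and about the $H^1$-stability of the discrete harmonic extension matches the paper, which justifies that stability by viewing $\hat\bv_s^h$ as the energy projection of the continuous harmonic extension.
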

Note that $d_0$ and $d_1$ are defined in (\ref{eq:d0d1}).
\begin{proof}
Based on (\ref{eq:infsup_f}), we know that given any $q^h\in\mathbb{Q}_h$, we can find $\bv_f^h\in \mathbb{V}_{h,f}$ such that 
$$\frac{(\nabla\cdot \bv^h_f,q^h)_{\Omega_f}}{\|\bv_f^h\|_1}\gtrsim \|q^h\|_0.$$

We take $\hat \bv_s^h$ such that $\hat\bv_s^h=\bv_f^h\circ\bx_s^h$ on $\hat\Gamma$ and
$$\int_{\hat\Omega_s}\nabla\hat\bv_s^h:\nabla\bphi_h=0,\quad\forall \bphi_h\in \mathbb{V}_{h,s}^0,$$
where $\mathbb{V}_{h,s}^0:=\{\bv\in \mathbb{V}_{h,s}| \bv=0, \mbox{ on }\partial\hat\Omega\}.$  This discrete harmonic extension $\hat\bv_{s}^h$ still satisfies
$$\|\hat\bv_s^h\|_{1,\hat\Omega_s}\lesssim \|\hat\bv_s^h\|_{1/2,\partial\hat\Omega_s}$$ since $\hat\bv_s^h$ is the projection of the continuous harmonic extension (see (\ref{eq:harmonic_extension})) under the inner product $(\nabla\bu,\nabla\bv).$

%
%

Then, take $\bv^h=(\bv_f^h,\hat\bv_s^h)\in \mathbb{V}_h$. We know that
$$\| \bv^h\|^2_1\lesssim d_0^{N+2}d_1^2\|\bv_f^h\|_{1}^2$$
and, therefore, the following inequality holds

$$\frac{(\nabla\cdot \bv^h,q^h)_{\Omega_f}}{\|\bv^h\|_1}\gtrsim  \frac{\|q^h\|_0}{d_0^{N/2+1}d_1} .$$
This finishes the proof.

\end{proof}

With the inf-sup condition of $b(\cdot,\cdot)$ proved, the well-posedness  of (\ref{eq:saddle_fem}) follows.
\begin{theorem}
\label{thm:wellposed_discrete}
Assume that the assumptions in Corollary \ref{coro:infsup_b_fem} hold and that at a given time step $t^n$, there exist constants $C_0$ and $C_1$ such that
$$\sup_{\hat\bx\in \hat\Gamma}\|\nabla\bx_s(\hat\bx)\|_{2}\leq C_0, \quad \sup_{\hat\bx\in \hat\Gamma}\left\{\det(\nabla\bx_s(\hat\bx))^{-1}\right\}\leq C_1.$$
Moreover, assume that $C_0$ and $C_1$ are independent of material and discretization parameters. Then, under the norms $\|\cdot\|_V$ and $\|\cdot\|_Q$ the stabilized variational problem (\ref{eq:saddle_fem_stab}) is uniformly well-posed with respect to material and discretization parameters.
\end{theorem}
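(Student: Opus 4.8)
The plan is to verify Brezzi's conditions (\ref{eq:brezzi_a}) and (\ref{eq:brezzi_b}) for the pair $(\tilde a,b)$ on $\mathbb{V}_h\times\mathbb{Q}_h$, tracking every constant so as to confirm independence of $\rho_f,\hat\rho_s,\mu_f,\mu_s,\lambda_s$ and of $k$ and $h$. The whole point of the stabilization is that identity (\ref{eq:atilde_coercive}) gives $\tilde a(\bu,\bu)=\|\bu\|_V^2$ for \emph{all} $\bu\in\mathbb{V}_h$, hence uniform coercivity with constant $1$ on the full space; in particular one never needs $\mathbb{Z}_h\subset\mathbb{Z}$, which is exactly the obstruction flagged after (\ref{eq:saddle_fem}). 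Boundedness of $\tilde a$ I would obtain by a term-by-term Cauchy--Schwarz estimate: each of the five summands of $a(\bu,\bv)$ and the stabilization term $r(\nabla\cdot\bu_f,\nabla\cdot\bv_f)_{\Omega_f}$ is separately dominated by $\|\bu\|_V\|\bv\|_V$ (for instance $\tfrac1k(\rho_f\bu_f,\bv_f)_{\Omega_f}\le\tfrac{\rho_f}{k}\|\bu_f\|_0\|\bv_f\|_0\le\|\bu\|_V\|\bv\|_V$ since $\tfrac{\rho_f}{k}\|\bu_f\|_0^2\le a(\bu,\bu)\le\|\bu\|_V^2$, and likewise for the remaining terms), so $\tilde a(\bu,\bv)\le 6\,\|\bu\|_V\|\bv\|_V$ with an absolute constant. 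Boundedness of $b$ is already (\ref{eq:b_bounded}).

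It then remains to transfer the discrete inf-sup estimate of Corollary \ref{coro:infsup_b_fem}, which is stated in the $\|\cdot\|_1$, $\|\cdot\|_0$ norms, to the weighted norms $\|\cdot\|_V$, $\|\cdot\|_Q$. Given $q_h\in\mathbb{Q}_h$, Corollary \ref{coro:infsup_b_fem} supplies $\bv_h\in\mathbb{V}_h$ with $b(\bv_h,q_h)\gtrsim d_0^{-(N/2+1)}d_1^{-1}\|\bv_h\|_1\|q_h\|_0$. Combining this with inequality (\ref{eq:Vr}), namely $\|\bv_h\|_V\lesssim r^{1/2}\|\bv_h\|_{1,\Omega}$, and with $\|q_h\|_Q=r^{-1/2}\|q_h\|_0$, the two powers of $r$ cancel and one obtains $b(\bv_h,q_h)/\|\bv_h\|_V\gtrsim d_0^{-(N/2+1)}d_1^{-1}\|q_h\|_Q$, i.e. the inf-sup condition (\ref{eq:brezzi_b}) with $\beta\gtrsim d_0^{-(N/2+1)}d_1^{-1}$. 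Since the hypotheses yield $d_0\le\max\{C_0,1\}$ and $d_1\le\max\{C_1,1\}$ with $C_0,C_1$ independent of all parameters, $\beta$ is bounded below uniformly, and the conclusion follows from the Brezzi theory exactly as in the proof of Theorem \ref{thm:wellposed_continuous}.

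I do not expect a serious obstacle: the heavy lifting (the geometric inf-sup estimate in terms of $d_0,d_1$ and the coercivity identity) is already in place, so the argument is essentially a check that the $r$-scaling is consistent across the norms of $\mathbb{V}$ and $\mathbb{Q}$. The one place that needs a little care is that $\tilde a$ must be \emph{both} coercive and bounded with respect to $\|\cdot\|_V$ on the full space $\mathbb{V}_h$ rather than merely on the kernel $\mathbb{Z}_h$, which is precisely why the stabilization term is weighted by the same $r$ that appears in $\|\cdot\|_V$; one could also note in passing that the unsymmetric variant obtained by adding $c(\cdot,\cdot)$ is handled the same way for $k$ small, using an analogue of (\ref{eq:ac_Velliptic}) for $\tilde a+c$.
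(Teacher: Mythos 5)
Your proposal is correct and follows essentially the same route as the paper: verify Brezzi's conditions, using the identity $\tilde a(\bu,\bu)=\|\bu\|_V^2$ for coercivity (and boundedness), (\ref{eq:b_bounded}) for the boundedness of $b$, and the transfer of the discrete inf-sup estimate of Corollary \ref{coro:infsup_b_fem} to the weighted norms via (\ref{eq:Vr}) with the $r$-powers cancelling, then bounding the constant by $d_0\le\max\{C_0,1\}$, $d_1\le\max\{C_1,1\}$. The only cosmetic difference is your term-by-term Cauchy--Schwarz bound for $\tilde a$, where the paper simply invokes the norm identity.
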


\begin{proof}
To prove this theorem we also verify the Brezzi's conditions. 

The boundedness and coercivity of $\tilde a(\cdot,\cdot)$ is obvious due to (\ref{eq:atilde_coercive}). The boundedness of $b(\cdot,\cdot)$ can be similarly proved by $(\ref{eq:b_bounded})$.  Corollary \ref{coro:infsup_b_fem} proves 
$$
\inf_{q\in\mathbb{Q}_h}\sup_{\bv\in \mathbb{V}_h}\frac{b(\bv,q)}{\|\bv\|_1\|q\|_0}\gtrsim \frac{1}{d_0^{N/2+1}d_1}.
$$
Since (\ref{eq:Vr}) still holds for $\bv\in\mathbb{V}_h$, the following inf-sup condition is proved
$$
\inf_{q\in\mathbb{Q}_h}\sup_{\bv\in \mathbb{V}_h}\frac{b(\bv,q)}{\|\bv\|_V\|q\|_Q}\gtrsim \frac{1}{d_0^{N/2+1}d_1}.
$$
Moreover, the inf-sup constant $d_0^{-N/2-1}d_1^{-1}$ is uniformly bounded below due to $d_0\leq\max\{C_0,1\}$ and $d_1\leq\max\{C_1,1\}$ .
We have verified all the Brezzi's conditions and all of the inequalities hold uniformly with respect to material parameters $\rho_f$, $\hat\rho_s$, $\mu_f$, $\mu_s$ and $\lambda_s$, time step size $k$ and mesh size. Therefore, (\ref{eq:saddle_fem}) is uniformly well-posed with respect to material and discretization parameters.
\end{proof}

\subsubsection{Remedy 2: A new norm for $\mathbb{V}$}

An equivalent form of the norm $\|\cdot\|_V$ is
%
$$\mbox{for all }\bu\in \mathbb{V},~~\|\bu\|_{V_Q}^2:=a(\bu,\bu)+r\|\mathcal{P}_{\mathbb{Q}}\nabla\cdot\bu_f\|_{0,\Omega_f}^2,$$
where $\mathcal{P}_{\mathbb{Q}}$ is the $L^2$ projection from $L^2(\Omega_f)$ to $\mathbb{Q}$. This norm was used in \cite{Benzi.M;Olshanskii.M2011a} to study the well-posedness of linearized Navier-Stokes equations.

Note that this norm depends on the choice of space $\mathbb{Q}$ and we use the subscript $V_Q$ to emphasize that.  For $\mathbb{Q}=L^2(\Omega_f)$, we have $\|\bu\|_{V}=\|\bu\|_{V_Q}$, for all $\bu\in \mathbb{V}$. For finite element pair $(\mathbb{V}_h,\mathbb{Q}_h)$, the norm is 
$$\forall\bu\in \mathbb{V}_h,~~\|\bu\|_{V_{Q}}^2=a(\bu,\bu)+r\|\mathcal{P}_{\mathbb{Q}_h}\nabla\cdot\bu_f\|_{0,\Omega_f}^2.\\$$

 With this new norm, we prove the well-posedness of the original finite element discretization (\ref{eq:saddle_fem}) without adding the stabilization term $r(\nabla\cdot\bu_f,\nabla\cdot\bv_f)_{\Omega_f}$.
 \begin{theorem}
 \label{thm:wellposed_discrete_newnorm}
 Assume that the assumptions in Theorem \ref{thm:wellposed_discrete} hold.  Then, under the norms $\|\cdot\|_{V_{Q}}$ and $\|\cdot\|_Q$ the original variational problem (\ref{eq:saddle_fem}) is uniformly well-posed with respect to material and discretization parameters.
\end{theorem}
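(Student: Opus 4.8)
The plan is to verify Brezzi's two conditions for (\ref{eq:saddle_fem}) with the norm pair $\|\cdot\|_{V_Q}$, $\|\cdot\|_Q$, just as was done for the stabilized problem in Theorem \ref{thm:wellposed_discrete}. The whole point of the $V_Q$ norm is that the troublesome inclusion $\mathbb{Z}_h\subset\mathbb{Z}$, which fails for generic finite element pairs, is no longer needed: on the discrete kernel $\mathbb{Z}_h$ one has $(\nabla\cdot\bv_{h,f},q_h)_{\Omega_f}=0$ for every $q_h\in\mathbb{Q}_h$, hence $\mathcal{P}_{\mathbb{Q}_h}\nabla\cdot\bv_{h,f}=0$, so the extra term in $\|\cdot\|_{V_Q}^2$ vanishes identically and $a(\bv_h,\bv_h)=\|\bv_h\|_{V_Q}^2$ for all $\bv_h\in\mathbb{Z}_h$. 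This gives coercivity of $a(\cdot,\cdot)$ on $\mathbb{Z}_h$ with constant $1$, and boundedness of $a(\cdot,\cdot)$ on all of $\mathbb{V}_h$ follows from the Cauchy--Schwarz inequality for the symmetric positive (semi)definite form $a$ together with $a(\bu,\bu)\le\|\bu\|_{V_Q}^2$.

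Next I would treat $b(\cdot,\cdot)$. For boundedness, since $q_h\in\mathbb{Q}_h$ one can insert the projection: $b(\bv_h,q_h)=(\nabla\cdot\bv_{h,f},q_h)_{\Omega_f}=(\mathcal{P}_{\mathbb{Q}_h}\nabla\cdot\bv_{h,f},q_h)_{\Omega_f}\le\|\mathcal{P}_{\mathbb{Q}_h}\nabla\cdot\bv_{h,f}\|_{0,\Omega_f}\|q_h\|_0\le\|\bv_h\|_{V_Q}\|q_h\|_Q$ by the definitions of the two norms and (\ref{eq:r}). For the inf-sup condition I would reuse Corollary \ref{coro:infsup_b_fem}, which, given $q_h\in\mathbb{Q}_h$, supplies $\bv_h\in\mathbb{V}_h$ with $b(\bv_h,q_h)\gtrsim d_0^{-N/2-1}d_1^{-1}\|\bv_h\|_1\|q_h\|_0$; it then remains only to pass from $\|\bv_h\|_1$ to $\|\bv_h\|_{V_Q}$. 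This is the analogue of (\ref{eq:Vr}): because every coefficient appearing in $a$ is bounded by $r$ and $\|\mathcal{P}_{\mathbb{Q}_h}\nabla\cdot\bv_{h,f}\|_0\le\|\nabla\cdot\bv_{h,f}\|_0\lesssim\|\bv_h\|_1$, one obtains $\|\bv_h\|_{V_Q}\le\|\bv_h\|_V\lesssim r^{1/2}\|\bv_h\|_1$. Combining this with $\|q_h\|_0=r^{1/2}\|q_h\|_Q$ converts the bound of Corollary \ref{coro:infsup_b_fem} into $\inf_{q_h\in\mathbb{Q}_h}\sup_{\bv_h\in\mathbb{V}_h} b(\bv_h,q_h)/(\|\bv_h\|_{V_Q}\|q_h\|_Q)\gtrsim d_0^{-N/2-1}d_1^{-1}$.

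Finally I would invoke $d_0\le\max\{C_0,1\}$ and $d_1\le\max\{C_1,1\}$ with $C_0,C_1$ independent of the material and discretization parameters, so the inf-sup constant is bounded below uniformly; all Brezzi constants are then uniform and (\ref{eq:saddle_fem}) is uniformly well-posed under $\|\cdot\|_{V_Q}$ and $\|\cdot\|_Q$. I do not expect a genuine obstacle: the essential difficulty—controlling the deformation-dependent trace/extension constant across the interface—was already resolved in Lemma \ref{lm:infsup_b_ale} and Corollary \ref{coro:infsup_b_fem}, and the only point requiring care is that the $L^2$ projection $\mathcal{P}_{\mathbb{Q}_h}$ is used consistently on both sides (in the coercivity-on-$\mathbb{Z}_h$ step and in the boundedness-of-$b$ step), so that the kernel coercivity constant comes out to be exactly $1$ rather than merely $r$-uniform.
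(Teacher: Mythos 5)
Your proposal is correct and follows essentially the same route as the paper's proof: coercivity on $\mathbb{Z}_h$ from the vanishing of $\mathcal{P}_{\mathbb{Q}_h}\nabla\cdot\bv_{h,f}$ on the discrete kernel, boundedness of $b(\cdot,\cdot)$ via the $L^2$ projection onto $\mathbb{Q}_h$ (the paper writes this as a supremum over $q_h$, which is the same estimate), and the inf-sup condition by combining Corollary \ref{coro:infsup_b_fem} with the bound $\|\bv\|_{V_Q}\lesssim r^{1/2}\|\bv\|_{1}$ and the uniform bounds on $d_0,d_1$. No gaps.
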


\begin{proof}
Note that under the new norm $\|\cdot\|_{V_Q}$, $a(\cdot,\cdot)$ is uniformly coercive in $\mathbb{Z}_h$. In fact,
$$\mbox{for all }\bu\in \mathbb{Z}_h, ~~ a(\bu,\bu)=\|\bu\|_{V_Q}^2.$$
The boundedness of $a(\cdot,\cdot)$ is obvious.  The boundedness of $b(\cdot,\cdot)$ is also easy to show:
$$b(\bv_f,p)=(\nabla\cdot\bv_f,p)_{\Omega_f}\leq \|p\|_{0,\Omega_f}\sup_{q\in\mathbb{Q}_h}\frac{(\nabla\cdot\bv_f,q)_{\Omega_f}}{\|q\|_{0,\Omega_f}}\leq \|p\|_Q\|\bv\|_{V_{Q_h}}.$$
Since
$$\|\bv\|_{V_Q}\lesssim r^{1/2}\|\bv\|_{1,\Omega}$$
is still valid, the inf-sup conditions of $b(\cdot,\cdot)$ can be proved by using Corollary \ref{coro:infsup_b_fem}.  This concludes our proof.
\end{proof}

We have provided two remedies in order to get uniformly well-posed finite element discretizations. In the next section, we introduce how these stable formulations can help us find optimal preconditioners.

\section{Solution of linear systems}

In this section, we consider preconditioners for (\ref{eq:saddle_fem}).  Define $\mathbb{X}_h=\mathbb{V}_h\times\mathbb{Q}_h$.  The underlying norm is
$$\|(\bv,p)\|^2_{X}=\|\bv\|_V^2+\|p\|_Q^2,\quad (\bv,p)\in \mathbb{X}_h.$$ 


 Consider the following saddle point problem:

\begin{description}
\item[]
Find $x\in \mathbb{X}_h$, such that
\begin{equation}
\label{eq:saddle_A}
K(x,y)=\langle\tilde g,y\rangle,\quad \forall y\in \mathbb{X}_h,
\end{equation}
\end{description}
where $\tilde g\in \mathbb{X}_h'$.
The operator form of (\ref{eq:saddle_A})  is

$$\cK_h x=\tilde g.$$

Under the assumption that (\ref{eq:saddle_A}) is uniformly well-posed, an optimal preconditioner can be found \cite{Mardal.K;Winther.R2011a,Zulehner.W2011a}, which is the Riesz operator $\cB_h:\mathbb{X}_h'\mapsto \mathbb{X}_h$ defined by

$$(\cB_h f,y)_X=\langle f,y\rangle,\quad \forall y\in \mathbb{X}_h, f\in \mathbb{X}_h'.$$
Thus, $\cB_h$ satisfies  
$$\kappa(\cB_h\cK_h)\lesssim 1.$$

The uniform boundedness of the condition number $\kappa(\cB_h\cK_h)$ results in uniform convergence of Krylov subspace methods, such as MINRES.   
\subsection{Two optimal preconditioners for FSI}
In the previous section, we have introduced two stable finite element formulations, which provide two optimal preconditioners.  To facilitate our discussion, we first introduce the block matrices $A_h$, $D_h$, $B_h$, defined by
\begin{equation*}
\begin{aligned}
( A_h\bar{u}_h,\bar{v}_h)&=a(\bu_h,\bv_h),\\
(B_h\bar{u}_h,\bar{p}_h)&=b(\bu_h,q_h),\\
(D_h\bar{u}_h,\bar{v}_h)&=(\nabla\cdot \bu_{h,f},\nabla\cdot\bv_{h,f})_{\Omega_f},\\
\end{aligned}
\end{equation*}
for any $\bu_h$, $\bv_h\in \mathbb{V}_h$ and $p_h\in \mathbb{Q}_h$. $\bar u_h, \bar v_h$ and $\bar p_h$ are the corresponding vector representations with given bases for $\mathbb{V}_h$ and $\mathbb{Q}_h$.  We also introduce the pressure mass matrix $M_p$.

Now, we introduce two optimal preconditioning strategies ({\bf M1}) and ({\bf M2}) based on the uniformly well-posed formulations introduced in the previous section. Note that these two preconditioners are applied to (\ref{eq:saddle_fem}) and (\ref{eq:saddle_fem_stab}), respectively.
\begin{itemize}
\item Formulation 1 ({\bf M1}): With the stabilization term added, (\ref{eq:saddle_fem_stab}) is uniformly well-posed under the norms $\|\cdot\|_V$ and $\|\cdot\|_Q$.  In this case, 
$$K(x,y)=\tilde a(\bv,\bphi)+b(\bphi,p)+b(\bv,q),$$
where $x=(\bv,p)$ and $y=(\bphi,q)$.

The optimal preconditioner in this case is
\begin{equation}
\label{eq:precond_B1}\cB_h^1=
\left(
\begin{array}{cc}
 A_h+rD_h &0\\
0& \frac{1}{r}M_p\\
\end{array}
\right)^{-1}.
\end{equation}

\item Formulation 2 ({\bf M2}): With the new norm $\|\cdot\|_{V_Q}$ introduced,  (\ref{eq:saddle_fem}) is uniformly well-posed under the norms $\|\cdot\|_{V_Q}$ and $\|\cdot\|_{Q}.$ In this case, 
$$K(x,y)=a(\bv,\bphi)+b(\bphi,p)+b(\bv,q),$$
where $x=(\bv,p)$ and $y=(\bphi,q)$.

Given $p_h\in \mathbb{Q}_h$ and $\bv_h\in \mathbb{V}_h$ satisfying $p_h=\mathcal{P}_Q(\nabla\cdot\bv_h)$, we know that 
$$M_p\bar p_h=B_h\bar v_h.$$ 
Therefore,
$$\|p_h\|_{0,\Omega_f}^2=\bar p_h^TM_p\bar p_h=\bar v_h^T B_h^TM_p^{-1}B_h\bar v_h.$$
Then we know that the corresponding optimal preconditioner  in this case is
\begin{equation}
\label{eq:precond_B2}\cB_h^2=
\left(
\begin{array}{cc}
A_h+rD_h^Q &0\\
0& \frac{1}{r}M_p\\
\end{array}
\right)^{-1},
\end{equation}
where $D_h^Q:=B_h^TM_p^{-1}B_h$.
\end{itemize}

\subsection{Comparing {$\cB_h^1$}, {$\bf \cB_h^2$} and the augmented Lagrangian (AL) preconditioner}

The AL preconditioner was proposed for Oseen problems in \cite{Benzi.M;Olshanskii.M2006a} and has been extended to the Navier-Stokes equations in \cite{Benzi.M;Olshanskii.M;Wang.Z2011a,Benzi.M;Olshanskii.M2011a}.   The AL preconditioner is designed for saddle point problems of the following form
\begin{equation}
\label{eq:saddle_block}
\left(
\begin{array}{cc}
A & B^T\\
B&0\\
\end{array}
\right)
\left(
\begin{array}{c}
u\\
p\\
\end{array}
\right)
=
\left(
\begin{array}{c}
f\\
0\\
\end{array}
\right).
\end{equation}
The AL preconditioner is applied to the modified saddle point problem
\begin{equation}
\label{eq:saddle_block_AL}
\left(
\begin{array}{cc}
A +\gamma B^TW^{-1}B& B^T\\
B&0\\
\end{array}
\right)
\left(
\begin{array}{c}
u\\
p\\
\end{array}
\right)
=
\left(
\begin{array}{c}
f\\
0\\
\end{array}
\right),
\end{equation}
and the ideal  form of the AL preconditioner is
\begin{equation}
P_\gamma=\left(
\begin{array}{cc}
A_\gamma & B^T\\
0&\frac{1}{\nu+\gamma}W\\
\end{array}
\right)^{-1},
\end{equation}
where $A_\gamma=A +\gamma B^TW^{-1}B$, $\nu$ is the kinematic viscosity, and the ideal choice of $W$ is the pressure mass matrix $M_p$.   
Note that (\ref{eq:saddle_block}) and (\ref{eq:saddle_block_AL}) have the same solution.

Practical choices for the preconditioner $P_{\gamma}$ are discussed extensively in literature, though we do not discuss this issue here.  For the application to the Oseen problem\cite{Benzi.M;Olshanskii.M2006a}, eigenvalue analysis shows that the preconditioned matrix has all the eigenvalues tend to $1$ as $\gamma$ tends to $\infty$.  In the application to linearized Navier-Stokes problem \cite{Benzi.M;Olshanskii.M2011a}, it is shown that for certain choices of the parameter $\gamma$, the convergence rate of AL-preconditioned GMRes is independent of discretization and material parameters.   Note that in these applications, convection terms are considered and, therefore, the linear systems are not symmetric.

The AL preconditioning technique can also be applied to our FSI problem.  By simply adding the term 
$r(P_Q\nabla\cdot\bu_f,\nabla\cdot\bv_f)_{\Omega_f}$ (or $rB^TW^{-1}B$ in matrix form) to the first equation of $(\ref{eq:saddle_fem})$, the resultant variational problem
\begin{description}
\item[] Find $\bv_h\in \mathbb{V}_h$ and  $p_h\in \mathbb{Q}_h$  such that
\begin{equation}
\label{eq:saddle_fem_AL}
\left\{
\begin{aligned}
&a(\bv_h,\bphi_h)+r(P_Q\nabla\cdot\bu_f,\nabla\cdot\bv_f)_{\Omega_f}+b(\bphi_h,p_h)&=&\langle \tilde g,\bphi_h\rangle,&\forall& \bphi_h\in \mathbb{V}_h,\\
&b(\bv_h,q_h) &=&0,&\forall& q_h\in \mathbb{Q}_h,\\
\end{aligned}
\right.
\end{equation}
\end{description}
is also uniformly well-posed under the norms $\|\cdot\|_{V_Q}$ and $\|\cdot\|_Q$ since adding this term  yields
$$a(\bu,\bu)+r(P_Q\nabla\cdot\bu_f,\nabla\cdot\bu_f)_{\Omega_f}=\|\bu\|_{V_Q}^2,~~\forall\bu\in\mathbb{V}_h,$$
and the boundedness and the inf-sup condition of $b(\cdot,\cdot)$ still hold.  Based on this observation, we propose the third optimal preconditioning strategy ({\bf M3}), which is very similar to the AL preconditioner.
\begin{itemize}
\item Formulation 3 ({\bf M3}): 
We take the following bilinear form $K(\cdot,\cdot)$ for the saddle point problem (\ref{eq:saddle_A})
$$K(x,y)=a(\bv,\bphi)+r(P_Q\nabla\cdot\bv_f,\nabla\cdot\bphi_f)_{\Omega_f}+b(\bphi,p)+b(\bv,q),$$
where $x=(\bv,p)$ and $y=(\bphi,q)$.

The optimal preconditioner in this case is also $\mathcal{B}_h^2$.
\end{itemize}

By using $\mathcal{B}_h^2$ in an upper triangular fashion, it becomes quite similar to the AL preconditioner.  Therefore, our analysis can also provide justification for the AL-type preconditioner for FSI in the absence of the convection term.  Note that the choice of parameters (in terms of $r$) in (\ref{eq:precond_B2}) is different from those used in AL precondtioners in the literature.

We compare the preconditioning techniques ({\bf M1}), ({\bf M2}) and ({\bf M3}) in the Table \ref{tb:comparison}.  All of these three preconditioners are similar to the velocity Schur complement preconditioners.  For comparison, we also list a pressure Schur complement (SC) preconditioner in Table \ref{tb:comparison}.

\begin{table}[htdp]
\caption{Compare {\bf M1}, {\bf M2}, {\bf M3} and SC} 
\begin{center}
\begin{tabular}{|c|c|c|}
\hline
 		&preconditioner	& stiffness matrix\\
\hline
{\bf M1}&$\left(
\begin{array}{cc}
A_h+rD_h &0\\
0& \frac{1}{r}M_p\\
\end{array}
\right)^{-1}$ 
&
$
\left(
\begin{array}{cc}
A_h+rD_h &B_h^T\\
B_h& 0\\
\end{array}
\right)
$
\\
\hline
{\bf M2}& $\left(
\begin{array}{cc}
A_h+rD_h^Q &0\\
0& \frac{1}{r} M_p\\
\end{array}
\right)^{-1}$
&
$
\left(
\begin{array}{cc}
A_h &B_h^T\\
B_h& 0\\
\end{array}
\right)
$
 \\	
\hline
{\bf M3} &
$\left(
\begin{array}{cc}
A_h+rD_h^Q &0\\
0& \frac{1}{r} M_p\\
\end{array}
\right)^{-1}$
&
$\left(
\begin{array}{cc}
A_h+rD_h^Q &B_h^T\\
B_h&0\\
\end{array}
\right)$
\\
\hline
SC&
$
\left(
\begin{array}{cc}
A_h &B_h^T\\
0& -B_hA_h^{-1}B_h^{T}\\
\end{array}
\right)^{-1}
$
&
$
\left(
\begin{array}{cc}
A_h &B_h^T\\
B_h& 0\\
\end{array}
\right)
$
 \\
\hline
\end{tabular}
\end{center}
\label{tb:comparison}
\end{table}%

Note that in the pressure Schur complement preconditioner (SC),  we use the inverse of the diagonal part of $A_h$ to approximate $A_h^{-1}$.
\begin{remark}

\begin{itemize}
\item Adding the term $r(\nabla\cdot\bu_f,\nabla\cdot\bv_f)_{\Omega_f}$ to the continuous problem (\ref{eq:saddle}) does not change the solution. But adding it may change the solution of finite element discretized problems; thus, (\ref{eq:saddle_fem}) and (\ref{eq:saddle_fem_stab}) may have different solutions, especially when $r$ is large.  In comparison, {\bf M2} and {\bf M3} do not change the solutions of finite element problems.
\item {\bf M2} and {\bf M3} have very similar forms. They differ in that {\bf M2} does not add $rD_h^Q$ to the stiffness matrix.

\item {\bf M1}, {\bf M2} and {\bf M3} are all proven to be optimal for FSI based on our analysis.

\end{itemize}
\end{remark}

For the practical implementation, the performance of these preconditioners also depends on the efficiency of inverting the diagonal blocks, such as $A_h+rD_h$ and $M_p$.  The mass matrix $M_p$ is easy to invert by iterative methods. The velocity block $A_h$ is symmetric positive definite for the FSI problem;  Krylov subspace method preconditioned by multigrid is usually one of the most efficient solvers. However, there are still some difficulties that need special consideration:
\begin{itemize}

\item  The different scales of the fluid and structure problems result in large jumps in coefficients.  For example, the material parameters $\mu_s$ and $\mu_f$ can differ greatly in magnitude.  This leads to the following general jump-coefficient problem:

$$\mbox{Find } \bu\in H^1_0(\Omega) \mbox{ such that } ~~a(\bu,\bv)=\langle f,\bv\rangle, ~~\mbox{ for all } \bv\in H^1_0(\Omega),$$
where $a(\bu,\bv)=(\alpha(\bx)\epsilon(\bu),\epsilon(\bv))+(\beta(\bx)\nabla\cdot\bu,\nabla\cdot\bv)+(\gamma(\bx)\bu,\bv)$. The domain $\bar\Omega=\bar\Omega_1\cup\bar\Omega_2$  is illustrated in Figure \ref{fig:two_domain}.

\begin{figure}[h]
\setlength{\unitlength}{0.36in} 
\centering 
\begin{picture}(10,4) 
\put(3,0){\framebox(4,2){$\Omega_1$}}
\put(3,2){\framebox(4,2){$\Omega_2$}} 
\end{picture}
\caption{The domain for the jump-coefficient problem} 
 \label{fig:two_domain}
 \end{figure}
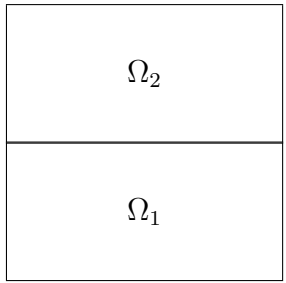

The coefficients $\alpha(\bx),\beta(\bx)$ and $\gamma(\bx)$ are piecewise positive constants on $\Omega_i$ $(i=1,2)$.  The question is how to design solvers that are robust with respect to the jumps of $\alpha(\bx)$, $\beta(\bx)$ and $\gamma(\bx)$.
 There is much research work on solving jump-coefficient problems. We refer to \cite{Xu.J;Zhu.Y2008a} and the references therein for related discussions.

\end{itemize}

\subsection{Numerical Examples}

In this section, we present some numerical experiments in order to verify our analysis.  Preconditioning techniques {\bf M1}, {\bf M2}, {\bf M3} and the SC preconditioner are tested.  

\begin{figure}
\begin{center}
\includegraphics[width=0.6\textwidth]{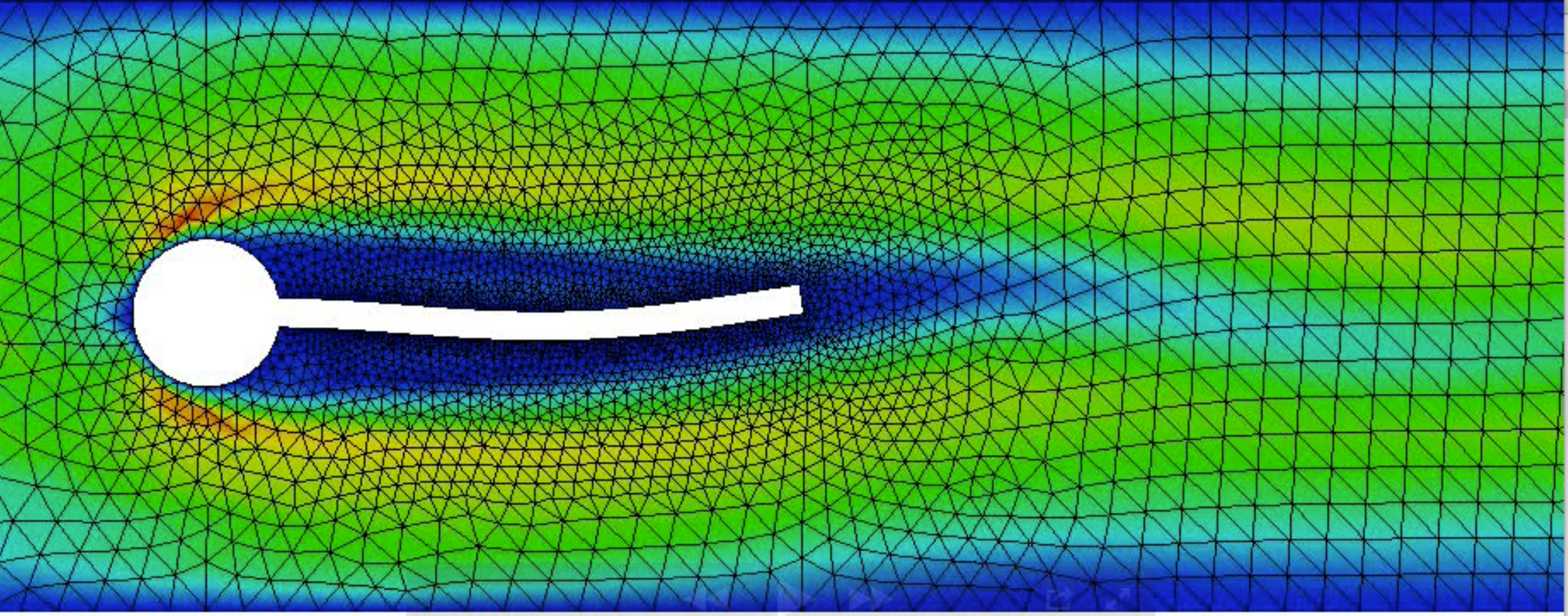}  
\end{center}
\caption{FSI benchmark problem} 
 \label{fig:fsi} 
\end{figure}
We use the data from the FSI benchmark problem in \cite{Turek.S;Hron.J2006a}.  Note that this is a 2D problem. The FSI code is implemented in the framework of FEniCS\cite{Logg.A;Mardal.K;Wells.G;others2012a}. The computational domain is shown in Figure \ref{fig:fsi}.  We have an elastic beam in a channel, where the inflow comes from the left end of the domain. We prescribe zero Dirichlet boundary conditions on the top and bottom of the channel. On the right end we use no-flux boundary condition.   We use P2-P0 finite elements for the FSI system.

%

%
  We use three meshes with different sizes.  Numbers of degrees of freedom for these meshes are shown in Table \ref{tb:meshsize}.

\begin{table}[htdp]
\caption{DoFs of the meshes}
\begin{center}
\begin{tabular}{|c|c|c|c|}
\hline 
		&  mesh 1  &  mesh 2  & mesh 3 \\ \hline
	DoF  &     11,714  &  45,932   &    181,880    \\ \hline 
\end{tabular}
\end{center}
\label{tb:meshsize}
\end{table}%


 The values of the parameter $r$ in {\bf M1}, {\bf M2} and {\bf M3} are the same and are calculated by (\ref{eq:r}). Preconditioned GMRes is used to solve the linear systems. Although {\bf M1}, {\bf M2} and {\bf M3} are originally block diagonal preconditioners, we use them in a block upper triangular fashion.  Each of the diagonal blocks is solved exactly.  The iteration of GMRes stops when the relative residual has magnitude less than $10^{-10}$.  

In Table \ref{tb:deltat}, we test the preconditioners for different meshes and time step sizes.  In Table \ref{tb:density}, we show the test results for different meshes and density ratios.

\begin{table}[htdp]
\caption{Number of iterations for preconditioned GMRES for different time step sizes ($k=0.01, 0.001, 0.0001$)}
\begin{center}
\begin{tabular}{|c|c|c|c|c|c|c|c|c|c|c|c|c|}
\hline 
	       &   \multicolumn{4}{c|}{$k=0.01$} & \multicolumn{4}{c|}{$k=0.001$} & \multicolumn{4}{c|}{$k=0.0001$} \\ \hline
  preconditioner              &  {\bf M1}   &  {\bf M2}  &  {\bf M3} & SC& {\bf M1}   &  {\bf M2}  &  {\bf M3} &  SC&{\bf M1}   &  {\bf M2}  &  {\bf M3}&  SC   \\ 
                \hline	
  mesh 1   &   2       & 20      &    6    &  16  &     1      &19   &8     & 10 & 1	&26 & 7    & 9\\  \hline 
  mesh 2   &    2      &   20       &   6    &  26&    1     &  19   &  8  & 12   &1    &17& 8   &  9\\ \hline   
  mesh 3   &     2     &  24        &7       &  54&    1      &   23&  9    &    21   &1	&27&8     & 11\\ \hline    
\end{tabular}
\end{center}
\label{tb:deltat}
\end{table}%

\begin{table}[htdp]
\caption{Number of iterations for preconditioned GMRES for varying density ratios}
\begin{center}
\begin{tabular}{|c|c|c|c|c|c|c|c|c|c|c|c|c|}
\hline
	       &   \multicolumn{4}{c|}{$\hat\rho_s=\rho_f$} & \multicolumn{4}{c|}{$\hat\rho_s=10\rho_f$} & \multicolumn{4}{c|}{$\hat\rho_s=100\rho_f$} \\ \hline
     preconditioner           &  {\bf M1}   &  {\bf M2}  &  {\bf M3} & SC& {\bf M1}   &  {\bf M2}  &  {\bf M3} &  SC&{\bf M1}   &  {\bf M2}  &  {\bf M3}  &SC \\ 
                \hline	
  mesh 1   &  5        &13       &6      &18     &2    & 20    &6  & 16   &    2    & 25  &6  &15   \\  \hline 
  mesh 2   &     5    &   21       &  6    &31   & 2    & 20   &6  &  26  &   2 &    25 &5  &26   \\ \hline   
  mesh 3   &      5   &  25        &   7    &  61   &2  &24 	&7 &  54  &   2        & 26 & 5& 53 \\ \hline    
\end{tabular}
\end{center}
\label{tb:density}
\end{table}%

 From the data we see that the convergence of preconditioned GMRes for {\bf M1}, {\bf M2} and {\bf M3} is almost uniform  and quite robust for different mesh sizes, time step sizes, and density ratios.  The case with SC shows dependence on mesh sizes and the dependence becomes more significant when the time step size $k$ grows.     {\bf M1} and {\bf M3} in general need significantly fewer number of iterations than {\bf M2} and are more stable than {\bf M2} for various combinations of material and discretization parameters.

\section*{Concluding remarks}

In this paper, we formulate the FSI discretized system as saddle point problems. Under mild assumptions, the uniform well-posedness of the saddle point problems is shown. By adding a stabilization term or adopting a new norm for velocity, the finite element discretization of the FSI problem is also proved to be uniformly well-posed.  Two optimal preconditioners are proposed based on the well-posed formulations.  Our theoretical framework also provides an alternative justification for the AL-type preconditioners in the absence of the convection term.   In the numerical examples, we show the robustness of these preconditioners.  We use direct solves for the sub-blocks. In practice, these sub-blocks have to be inverted by iterative methods when their sizes are large. Robust preconditioners for the sub-blocks have to be considered.  
\section*{Acknowledgements}
We appreciate the contributions to the numerical tests from Dr. Xiaozhe Hu, Dr. Pengtao Sun, Feiteng Huang, and Lu Wang and many suggestions from  Dr. Shuo Zhang, Dr. Xiaozhe Hu, and Dr. Maximilian Metti, which have greatly improved the presentation of this paper. We also appreciate the helpful suggestions from Professor Alfio Quarteroni and Dr. Simone Depairs during the visit of the second author to EPFL.

}

\bibliographystyle{plain}
\bibliography{main}

\begin{thebibliography}{10}

\bibitem{Badia.S;Quaini.a;Quarteroni.a2008b}
S~Badia, A~Quaini, and A~Quarteroni.
\newblock {Modular vs. non-modular preconditioners for fluid--structure systems
  with large added-mass effect}.
\newblock {\em Computer Methods in Applied Mechanics and Engineering},
  197(49-50):4216--4232, September 2008.

\bibitem{Badia.S;Quaini.A;Quarteroni.A2008a}
Santiago Badia, Annalisa Quaini, and Alfio Quarteroni.
\newblock {Splitting methods based on algebraic factorization for
  fluid-structure interaction}.
\newblock {\em SIAM J. Sci. Comput.}, 30(4):1778--1805, 2008.

\bibitem{Barker.A;Cai.X2010a}
Andrew~T. Barker and Xiao-Chuan Cai.
\newblock {Scalable parallel methods for monolithic coupling in
  fluid--structure interaction with application to blood flow modeling}.
\newblock {\em Journal of Computational Physics}, 229(3):642--659, February
  2010.

\bibitem{Barker.A;Cai.X2010b}
Andrew~T Barker and Xiao-Chuan Cai.
\newblock {Two-level Newton and hybrid Schwarz preconditioners for
  fluid-structure interaction}.
\newblock {\em SIAM J. Sci. Comput.}, 32(4):2395--2417, 2010.

\bibitem{Barker.A;Cai.X2009a}
A.T. Barker and X.C. Cai.
\newblock {NKS for fully coupled fluid-structure interaction with application}.
\newblock {\em Domain Decomposition Methods in Science and Engineering XVIII},
  pages 275--282, 2009.

\bibitem{Bazilevs.Y;Hsu.M;Akkerman.I;Wright.S;Takizawa.K;Henicke.B;Spielman.T;Tezduyar.T2011a}
Y.~Bazilevs, M.-C. Hsu, I.~Akkerman, S.~Wright, K.~Takizawa, B.~Henicke,
  T.~Spielman, and T.~E. Tezduyar.
\newblock {3D simulation of wind turbine rotors at full scale. Part I: Geometry
  modeling and aerodynamics}.
\newblock {\em International Journal for Numerical Methods in Fluids},
  65(1-3):207--235, 2011.

\bibitem{Bazilevs.Y;Hsu.M;Kiendl.J;Wuchner.R;Bletzinger.K2011a}
Y.~Bazilevs, M.-C. Hsu, J.~Kiendl, R.~W{\"u}chner, and K.-U. Bletzinger.
\newblock {3D simulation of wind turbine rotors at full scale. Part II:
  Fluid--structure interaction modeling with composite blades}.
\newblock {\em International Journal for Numerical Methods in Fluids},
  65(1-3):236--253, 2011.

\bibitem{Bazilevs.Y;Takizawa.K;Tezduyar.Ta}
Yuri Bazilevs, Kenji Takizawa, and Tayfun~E. Tezduyar.
\newblock {\em Computational Fluid-Structure Interactions: Methods and
  Applications}.
\newblock John Wiley \& Sons, 2012.

\bibitem{Benzi.M;Olshanskii.M2006a}
Michele Benzi and Maxim~A Olshanskii.
\newblock {An augmented Lagrangian-based approach to the Oseen problem}.
\newblock {\em SIAM Journal on Scientific Computing}, 28(6):2095--2113, 2006.

\bibitem{Benzi.M;Olshanskii.M2011a}
Michele Benzi and Maxim~A Olshanskii.
\newblock Field-of-values convergence analysis of augmented {L}agrangian
  preconditioners for the linearized {Navier-Stokes} problem.
\newblock {\em SIAM Journal on Numerical Analysis}, 49(2):770--788, 2011.

\bibitem{Benzi.M;Olshanskii.M;Wang.Z2011a}
Michele Benzi, Maxim~A Olshanskii, and Zhen Wang.
\newblock Modified augmented {L}agrangian preconditioners for the
  incompressible navier--stokes equations.
\newblock {\em International Journal for Numerical Methods in Fluids},
  66(4):486--508, 2011.

\bibitem{Bramble.J;Lazarov.R;Pasciak.J2001a}
James~H Bramble, Raytcho~D Lazarov, and Joseph~E Pasciak.
\newblock Least-squares methods for linear elasticity based on a discrete minus
  one inner product.
\newblock {\em Computer methods in applied mechanics and engineering},
  191(8):727--744, 2001.

\bibitem{Brezzi.F;Fortin.M1991a}
Franco Brezzi and Michel Fortin.
\newblock {\em Mixed and hybrid finite element methods}.
\newblock Springer-Verlag New York, Inc., 1991.

\bibitem{Cai.X2010a}
Xiao-Chuan Cai.
\newblock {Two-level Newton and hybrid Schwarz preconditioners for
  Fluid-Structure Interaction}.
\newblock {\em SIAM J. SCI. COMPUT}, 32(4):2395--2417, 2010.

\bibitem{Causin.P;Gerbeau.J;Nobile.F2005a}
P~Causin, J~F Gerbeau, and F~Nobile.
\newblock {Added-mass effect in the design of partitioned algorithms for
  fluid-structure problems}.
\newblock {\em Comput. Methods Appl. Mech. Engrg.}, 194(42-44):4506--4527,
  2005.

\bibitem{Codina.R2002a}
Ramon Codina.
\newblock Stabilized finite element approximation of transient incompressible
  flows using orthogonal subscales.
\newblock {\em Computer Methods in Applied Mechanics and Engineering},
  191(39):4295--4321, 2002.

\bibitem{Crosetto.P2011a}
Paolo Crosetto.
\newblock {\em {Fluid-Structure Interaction Problems in Hemodynamics: Parallel
  Solvers, Preconditioners, and Applications}}.
\newblock PhD thesis, {\'E}COLE POLYTECHNIQUE F{\'E}D{\'E}RALE DE LAUSANNE,
  2011.

\bibitem{Crosetto.P;Deparis.S;Fourestey.G;Quarteroni.A2010a}
Paolo Crosetto, Simone Deparis, Gilles Fourestey, and Alfio Quarteroni.
\newblock {Parallel algorithms for fluid-structure interaction problems in
  haemodynamics}.
\newblock {\em SIAM Journal on Scientific Computing}, 33(4):1598--1622, 2011.

\bibitem{Crosetto.P;Reymond.P;Deparis.S;Kontaxakis.D;Stergiopulos.N;Quarteroni.A2011a}
Paolo Crosetto, Philippe Reymond, Simone Deparis, Dimitrios Kontaxakis,
  Nikolaos Stergiopulos, and Alfio Quarteroni.
\newblock {Fluid-structure interaction simulation of aortic blood flow}.
\newblock {\em Comput. \& Fluids}, 43:46--57, 2011.

\bibitem{Donea.J;Giuliani.S;Halleux.J1982a}
J~Donea, S~Giuliani, and JP~Halleux.
\newblock {An arbitrary Lagrangian-Eulerian finite element method for transient
  dynamic fluid-structure interactions}.
\newblock {\em Computer Methods in Applied Mechanics and Engineering},
  33(1):689--723, 1982.

\bibitem{Donea.J;Huerta.A;Ponthot.J;Rodriguez-Ferran.A2004a}
J~Donea, A~Huerta, J.P. Ponthot, and A.~Rodriguez-Ferran.
\newblock {Arbitrary Lagrangian--Eulerian methods}.
\newblock {\em Encyclopedia of computational mechanics}, pages 1--38, 2004.

\bibitem{Dunne.T;Rannacher.R;Richter.T2010a}
Th~Dunne, R~Rannacher, and Th~Richter.
\newblock Numerical simulation of fluid-structure interaction based on
  monolithic variational formulations.
\newblock {\em Fundamental Trends in Fluid-Structure Interaction}, 1:1--75,
  2010.

\bibitem{Fernandez.M;Gerbeau.J;Grandmont.C2007a}
M~\'{A} Fern\'{a}ndez, J.-F. Gerbeau, and C~Grandmont.
\newblock {A projection semi-implicit scheme for the coupling of an elastic
  structure with an incompressible fluid}.
\newblock {\em Internat. J. Numer. Methods Engrg.}, 69(4):794--821, 2007.

\bibitem{Fernandez.M;Moubachir.M2005a}
M~\'{A} Fern\'{a}ndez and M~Moubachir.
\newblock {A Newton method using exact Jacobians for solving fluid--structure
  coupling}.
\newblock {\em Computers \& Structures}, 83(2):127--142, 2005.

\bibitem{Fernandez.M;Gerbeau.J2009a}
Miguel~\'{A} Fern{\'a}ndez and Jean-Fr{\'e}d{\'e}ric Gerbeau.
\newblock Algorithms for fluid-structure interaction problems.
\newblock In {\em Cardiovascular mathematics}, pages 307--346. Springer, 2009.

\bibitem{Formaggia.L;Quarteroni.A;Veneziani.A2009b}
Luca Formaggia, Alfio Quarteroni, and Allesandro Veneziani.
\newblock {\em {Cardiovascular mathematics}}.
\newblock Number CMCS-BOOK-2009-001. Springer, 2009.

\bibitem{Gee.M;Kuttler.U;Wall.W2011a}
M~W Gee, U~K\"{u}ttler, and W~A Wall.
\newblock {Truly monolithic algebraic multigrid for fluid -- structure
  interaction}.
\newblock {\em International Journal for Numerical Methods in Engineering},
  85(8):987--1016, 2011.

\bibitem{Girault.V;Raviart.P1986a}
Vivette Girault and Pierre-Arnaud Raviart.
\newblock {Finite element methods for Navier-Stokes equations: theory and
  algorithms}.
\newblock {\em NASA STI/Recon Technical Report A}, 87:52227, 1986.

\bibitem{Glowinski.R;Pan.T;Hesla.T;Joseph.D;Periaux.J2001a}
R~Glowinski, TW~Pan, TI~Hesla, DD~Joseph, and J~Periaux.
\newblock A fictitious domain approach to the direct numerical simulation of
  incompressible viscous flow past moving rigid bodies: application to
  particulate flow.
\newblock {\em Journal of Computational Physics}, 169(2):363--426, 2001.

\bibitem{Heil.M2004a}
Matthias Heil.
\newblock {An efficient solver for the fully coupled solution of
  large-displacement fluid-structure interaction problems}.
\newblock {\em Comput. Methods Appl. Mech. Engrg.}, 193(1-2):1--23, 2004.

\bibitem{Hron.J;Turek.S2006a}
Jaroslav Hron and Stefan Turek.
\newblock {\em A monolithic FEM/multigrid solver for an ALE formulation of
  fluid-structure interaction with applications in biomechanics}.
\newblock Springer, 2006.

\bibitem{Hsu.M;Bazilevs.Y2012a}
Ming-Chen Hsu and Yuri Bazilevs.
\newblock Fluid--structure interaction modeling of wind turbines: simulating
  the full machine.
\newblock {\em Computational Mechanics}, 50(6):821--833, 2012.

\bibitem{Hughes.T;Liu.W;Zimmermann.T1981a}
Thomas~JR Hughes, Wing~Kam Liu, and Thomas~K Zimmermann.
\newblock {Lagrangian-Eulerian finite element formulation for incompressible
  viscous flows}.
\newblock {\em Computer methods in applied mechanics and engineering},
  29(3):329--349, 1981.

\bibitem{Logg.A;Mardal.K;Wells.G;others2012a}
Anders Logg, Kent-Andre Mardal, Garth~N. Wells, et~al.
\newblock {\em Automated Solution of Differential Equations by the Finite
  Element Method}.
\newblock Springer, 2012.

\bibitem{Malossi.A;Blanco.P;Crosetto.P;Deparis.S;Quarteroni.A2013a}
A~Cristiano~I Malossi, Pablo~J Blanco, Paolo Crosetto, Simone Deparis, and
  Alfio Quarteroni.
\newblock Implicit coupling of one-dimensional and three-dimensional blood flow
  models with compliant vessels.
\newblock {\em Multiscale Modeling \& Simulation}, 11(2):474--506, 2013.

\bibitem{Mardal.K;Winther.R2011a}
Kent-Andre Mardal and Ragnar Winther.
\newblock Preconditioning discretizations of systems of partial differential
  equations.
\newblock {\em Numerical Linear Algebra with Applications}, 18(1):1--40, 2011.

\bibitem{Murea.C;Sy.S2009a}
CM~Murea and S~Sy.
\newblock A fast method for solving fluid--structure interaction problems
  numerically.
\newblock {\em International journal for numerical methods in fluids},
  60(10):1149--1172, 2009.

\bibitem{Newmark.N1959a}
Nathan~Mortimore Newmark.
\newblock A method of computation for structural dynamics.
\newblock In {\em Proc. ASCE}, volume~85, pages 67--94, 1959.

\bibitem{Olshanskii.M;Reusken.A2004a}
Maxim Olshanskii and Arnold Reusken.
\newblock Grad-div stablilization for stokes equations.
\newblock {\em Mathematics of Computation}, 73(248):1699--1718, 2004.

\bibitem{Olshanskii.M;Benzi.M2008a}
Maxim~A Olshanskii and Michele Benzi.
\newblock An augmented {L}agrangian approach to linearized problems in
  hydrodynamic stability.
\newblock {\em SIAM Journal on Scientific Computing}, 30(3):1459--1473, 2008.

\bibitem{Peskin.C2002a}
Charles~S Peskin.
\newblock The immersed boundary method.
\newblock {\em Acta numerica}, 11(0):479--517, 2002.

\bibitem{Quaini.A;Quarteroni.A2007a}
A~Quaini and A~Quarteroni.
\newblock {A semi-implicit approach for fluid-structure interaction based on an
  algebraic fractional step method}.
\newblock {\em Math. Models Methods Appl. Sci.}, 17(6):957--983, 2007.

\bibitem{Quarteroni.A2010a}
A~Quarteroni.
\newblock {Fluid-structure interaction between blood and arterial walls}.
\newblock In {\em Fundamental trends in fluid-structure interaction}, volume~1
  of {\em Contemp. Chall. Math. Fluid Dyn. Appl.}, pages 261--289. World Sci.
  Publ., Hackensack, NJ, 2010.

\bibitem{Quarteroni.A;Veneziani.A;Zunino.P2001a}
Alfio Quarteroni, Alessandro Veneziani, and Paolo Zunino.
\newblock {Mathematical and numerical modeling of solute dynamics in blood flow
  and arterial walls}.
\newblock {\em SIAM J. Numer. Anal.}, 39(5):1488--1511, 2001.

\bibitem{Tezduyar.T;Sathe.S;Stein.K2006a}
Tayfun~E Tezduyar, Sunil Sathe, and Keith Stein.
\newblock {Solution techniques for the fully discretized equations in
  computation of fluid-structure interactions with the space-time
  formulations}.
\newblock {\em Comput. Methods Appl. Mech. Engrg.}, 195(41-43):5743--5753,
  2006.

\bibitem{Tezduyar.T;Sathe.S2007a}
T.E. Tezduyar and Sunil Sathe.
\newblock {Modelling of fluid--structure interactions with the space--time
  finite elements: solution techniques}.
\newblock {\em International Journal for Numerical Methods in Fluids},
  54(6-8):855--900, 2007.

\bibitem{Turek.S;Hron.J2010a}
S~Turek and J~Hron.
\newblock {Numerical Simulation and Benchmarking of a Monolithic Multigrid
  Solver for Fluid-Structure Interaction Problems with Application to
  Hemodynamics}.
\newblock {\em Fluid Structure Interaction II:Modelling, Simulation,
  Optimization}, 73:193, 2010.

\bibitem{Turek.S;Hron.J2006a}
Stefan Turek and Jaroslav Hron.
\newblock {\em Proposal for numerical benchmarking of fluid-structure
  interaction between an elastic object and laminar incompressible flow}.
\newblock Springer, 2006.

\bibitem{Wang.H;Chessa.J;Liu.W;Belytschko.T2008a}
Hongwu Wang, Jack Chessa, Wing~K Liu, and Ted Belytschko.
\newblock {The immersed/fictitious element method for fluid-structure
  interaction: volumetric consistency, compressibility and thin members}.
\newblock {\em Internat. J. Numer. Methods Engrg.}, 74(1):32--55, 2008.

\bibitem{Xie.X;Xu.J;Xue.G2008a}
Xiaoping Xie, Jinchao Xu, and Guangri Xue.
\newblock Uniformly-stable finite element methods for darcy-stokes-brinkman
  models.
\newblock {\em Journal of Computational Mathematics-International Edition},
  26(3):437, 2008.

\bibitem{Xu.J;Zhu.Y2008a}
Jinchao Xu and Yunrong Zhu.
\newblock Uniform convergent multigrid methods for elliptic problems with
  strongly discontinuous coefficients.
\newblock {\em Mathematical Models and Methods in Applied Sciences},
  18(01):77--105, 2008.

\bibitem{Xu.J;Zou.J1998a}
Jinchao Xu and Jun Zou.
\newblock Some nonoverlapping domain decomposition methods.
\newblock {\em SIAM review}, 40(4):857--914, 1998.

\bibitem{Yu.Z2005a}
Zhaosheng Yu.
\newblock A {DLM/FD} method for fluid/flexible-body interactions.
\newblock {\em Journal of Computational Physics}, 207(1):1--27, 2005.

\bibitem{Zhang.L;Gerstenberger.A;Wang.X;Liu.W2004a}
Lucy Zhang, Axel Gerstenberger, Xiaodong Wang, and Wing~Kam Liu.
\newblock Immersed finite element method.
\newblock {\em Computer Methods in Applied Mechanics and Engineering},
  193(21):2051--2067, 2004.

\bibitem{Zulehner.W2011a}
Walter Zulehner.
\newblock Nonstandard norms and robust estimates for saddle point problems.
\newblock {\em SIAM Journal on Matrix Analysis and Applications},
  32(2):536--560, 2011.

\end{thebibliography}

\end{document}